\theoremstyle{plain}
\newtheorem{thm}{Theorem}[section]
\newtheorem{rk}[thm]{Remark}
\newtheorem{prop}[thm]{Proposition}
\newtheorem{lemma}[thm]{Lemma}
\newtheorem{defi}[thm]{Definition}
\newtheorem{maintheorem}{Theorem}
\newtheorem{claim}[thm]{Claim}
\newcommand{\re}{{\Bbb R}}
\newcommand{\nat}{{\Bbb N}}
\newcommand{\f}{{\mathcal F}}
\newcommand{\Si}{\Sigma}
\newcommand{\ce}{\left\langle}
\newcommand{\cd}{\right\rangle}
\title{Existence of periodic orbits for \\ sectional Anosov flows}
\author{A. M. L\'opez B.
        \thanks{
{\em Key words and phrases}:
Attractor, Repeller, Maximal invariant, Sectional hyperbolic set.
This work is partially supported by CAPES, Brazil.}}
\date{}
\begin{document}
\maketitle

\begin{abstract}
We prove that every sectional Anosov flow
(or, equivalently, every sectional-hyperbolic attracting set of a flow) on a compact manifold has a periodic orbit.
This extends the previous three-dimensional result obtained in \cite{bm}.
\end{abstract}


\section{Introduction}

\noindent
A well-known problem in dynamics
is to investigate the existence of periodic orbits for flows on compact manifolds.
This problem has a satisfactory solution under certain circunstancies.
In fact, every Anosov flow of a compact manifold has not only one but
infinitely many periodic orbits instead.
In this paper we shall investigate this problem not for Anosov but for the sectional Anosov flows
introduced in \cite{mor}. It is known for instance that
every sectional Anosov flow of a compact $3$-manifold has a periodic orbit, this was proved in \cite{bm}.
In the transitive case (i.e. with a dense orbit in the maximal invariant set)
it is known that the maximal invariant set consists of a homoclinic class and, therefore, the flow
has infinitely many periodic orbits \cite{ap}.
Another relevant result by Reis \cite{r}
proves the existence of infinitely many periodic orbits under certain conditions.
Our goal here is to extend \cite{bm} to the higher-dimensional setting.
More precisely, we shall prove that every sectional Anosov flow
(or, equivalently, every sectional hyperbolic attracting set of a flow) on a compact manifold has a periodic orbit.
Let us state our result in a precise way.

Consider a compact manifold $M$ of dimension $n\geq 3$
with a Riemannian structure $\|\cdot\|$ (sometimes we write {\em compact $n$-manifold} for short).
We denote by $\partial M$ the boundary of $M$.
Fix $X$ a vector field on $M$, inwardly
transverse to the boundary $\partial M$ (if nonempty) and denotes
by $X_t$ the flow of $X$, $t\in I\!\! R$.
The {\em maximal invariant} set of $X$ is defined by 
$$
M(X)= \displaystyle\bigcap_{t \geq 0} X_t(M).
$$
Notice that $M(X)=M$ in the boundaryless case $\partial M=\emptyset$.
A subset $\Lambda\subset M(X)$ is called {\em invariant} if
$X_t(\Lambda)=\Lambda$ for every $t\in I\!\! R$.
We denote by $m(L)$ the minimum norm of a linear
operator $L$, i.e., $m(L)= inf_{v \neq 0} \frac{\left\|Lv\right\|}{\left\|v\right\|}$.

\begin{defi}
\label{d2}
A compact invariant set
$\Lambda$ of $X$
is {\em partially hyperbolic}
if there is a continuous invariant
splitting $T_\Lambda M=E^s\oplus E^c$ 
such that the following properties
hold for some positive constants $C,\lambda$:

\begin{enumerate}
\item
$E^s$ is {\em contracting}, i.e., 
$\mid\mid DX_t(x) \left|_{E^s_x}\right. \mid\mid
\leq Ce^{-\lambda t}$, 
for all $x\in \Lambda$ and $t>0$.
\item
$E^s$ {\em dominates} $E^c$, i.e., 
$\frac{\mid\mid DX_t(x) \left|_{E^s_x}\right. \mid\mid}{m(DX_t(x) \left|_{E^c_x}\right. )}
\leq Ce^{-\lambda t}$, 
for all $x\in \Lambda$ and $t>0$.
\end{enumerate}

We say the central subbundle $E^c_x$ of $\Lambda$ is 
{\em sectionally expanding} if
$$dim(E^c_x) \geq 2 \quad 
\mbox{and} \quad 
\left| det(DX_t(x) \left|_{L_x}\right. ) \right| \geq C^{-1}e^{\lambda t},
\mbox{ for all } x \in \Lambda,\quad t>0 $$ 
and all two-dimensional subspace $L_x$ of $E^c_x$.
Here $det(DX_t(x) \left|_{L_x}\right. )$ denotes
the jacobian of $DX_t(x)$ along $L_x$.
\end{defi}

Recall that a {\em singularity} of a vector field is a zeroe of it.
We say that it is hyperbolic if
the eigenvalues of its linear part have non zero real part.
On the other hand, a point $p$ is {\em periodic}
if there is a minimal $T>0$ (called period) such that $X_T(p)=p$.
By a {\em periodic orbit} we mean the full orbit $\{X_t(p):t\in\mathbb{R}\}$
of a periodic point $p$.

\begin{defi}
\label{shs}
A {\em sectional hyperbolic set}
is a partially hyperbolic set whose singularities are hyperbolic and whose central subbundle is sectionally expanding.
\end{defi}

With these definitions we can state our main results.

\begin{maintheorem}
\label{thA}
Every sectional Anosov flow on a compact manifold has a periodic orbit.
\end{maintheorem}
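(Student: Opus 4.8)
The plan is to argue by contradiction: assume a sectional Anosov flow $X$ on a compact $n$-manifold $M$ has no periodic orbit, and derive a contradiction by producing one. First I would recall that, since $X$ is sectional Anosov, the maximal invariant set $M(X)$ is a sectional hyperbolic set, hence in particular partially hyperbolic with splitting $E^s\oplus E^c$; by the standard Hadamard–Perron theory for partially hyperbolic sets this gives a locally invariant strong stable foliation $\mathcal F^{ss}$ whose leaves are uniformly contracted, and a center-unstable cone field. The key structural input is the singularities: each singularity $\sigma$ is hyperbolic, and sectional expansiveness of $E^c$ forces $\dim E^c\geq 2$, so $\sigma$ is a saddle whose unstable manifold $W^u(\sigma)$ has dimension $\geq 2$. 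If $X$ had no singularities at all, $M(X)$ would be a uniformly hyperbolic set for a nonsingular flow, and then classical arguments (shadowing, or the fact that a nonsingular hyperbolic attracting set carries a periodic orbit) close the argument immediately; so the whole difficulty is concentrated in the singular case.

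The main step is therefore to control the dynamics near the singularities. For a hyperbolic singularity $\sigma$ of a sectional hyperbolic set, the local stable manifold $W^s_{loc}(\sigma)$ has codimension at least $2$ relative to the unstable directions; combining the contraction on $E^s$ with the sectional expansion on two-planes in $E^c$, one shows (as in the Lorenz-like local models) that $\sigma$ is \emph{Lorenz-like}: there is exactly one contracting eigenvalue $\lambda_{ss}$ with $\lambda_{ss}<\lambda_2<0<\lambda_u$ where $\lambda_2$ is the weakest contracting direction, so that the strong stable leaf $W^{ss}_{loc}(\sigma)$ separates $W^s_{loc}(\sigma)$ and every orbit in $M(X)$ that is not in $W^{ss}_{loc}(\sigma)$ leaves a neighborhood of $\sigma$ in forward time. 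I would then take a cross section $\Sigma$ transverse to $X$ near such a $\sigma$, cut it by the trace of $W^{ss}_{loc}(\sigma)$, and study the return map: orbits entering near $\sigma$ are flung out along $W^u(\sigma)$, and because $M(X)=\bigcap_{t\geq0}X_t(M)$ is an attracting set, they must return. The return map on the (compact) quotient of $\Sigma$ by $\mathcal F^{ss}$ is a well-defined continuous map of a compact space homotopic to a disk (or a finite union thereof), so a Brouwer/Lefschetz fixed-point argument, after excising the singular leaves, yields a fixed point of the return map, i.e. a periodic orbit of $X$ — provided one shows the excised pieces do not carry all the recurrence.

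The hard part will be exactly this last point: ruling out the degenerate scenario in which all recurrent dynamics in $M(X)$ is ``absorbed'' by the stable manifolds of the singularities, so that the return map has no fixed point and the would-be periodic orbit escapes to $\sigma$. To handle it I would analyze the boundary behavior: the pieces of $\Sigma$ adjacent to the singular leaves are mapped, under the return map, strictly \emph{inside} $\Sigma$ (this is where inward transversality to $\partial M$, compactness of $M(X)$, and the expansion along $W^u(\sigma)$ are used to get a genuine ``overflowing'' or at least non-escaping configuration), which forces the return map to restrict to a self-map of a compact disk-like region away from the singular leaves; then the fixed-point theorem applies cleanly. Auxiliary technical steps along the way — verifying the singularities are Lorenz-like in arbitrary dimension, constructing the singular cross-sections and checking the return map is well-defined and continuous after quotienting by $\mathcal F^{ss}$, and checking the topology of the quotient section — are routine extensions of the three-dimensional constructions in \cite{bm} and I would only sketch them.
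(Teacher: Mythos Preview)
Your overall architecture matches the paper's: reduce to the case with Lorenz-like singularities, build singular cross-sections, study the return map on the quotient by the stable foliation, and find a fixed point. But the heart of your argument --- the fixed-point step --- has a genuine gap.

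You claim that, after excising the singular leaves, the induced map on the quotient $\Sigma/\mathcal F^{ss}$ becomes a continuous self-map of a compact disk-like region, so Brouwer/Lefschetz applies directly. This is not true, already in the classical three-dimensional Lorenz model. There the quotient is an interval, the induced one-dimensional map has a discontinuity at the singular leaf, and it is \emph{expanding}: the two branches do not land inside any compact subinterval avoiding the singular point. No excision produces a self-map of a disk. In higher dimensions the quotient is a $u$-dimensional ball (or a finite union of them), the return map is discontinuous along an entire codimension-one set $L_0$ (and along further leaves $D(F)$ whose images hit $\partial^v\Sigma$), and the same obstruction persists. Your ``boundary pieces are mapped strictly inside'' claim is exactly what fails for a Lorenz-like return map.

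What the paper does instead is a combinatorial growth argument that \emph{earns} the covering relation needed for Brouwer. One fixes $\lambda>2$ so that the return map is $\lambda$-hyperbolic on a cone field transverse to $\mathcal F$; then any small $u$-surface tangent to the cone, iterated forward, grows in volume by at least $\lambda$ at each step. When it hits the discontinuity set one keeps the larger half, still gaining a factor $\beta=\lambda/2>1$. Since $\Sigma$ has finite diameter this process terminates with a $u$-surface spanning an open vertical band $H(W,W')$ whose boundary leaves lie in the finite set $\partial^v\Sigma\cup\mathcal V\cup\mathcal L$. Because there are only finitely many such bands, iterating produces a cycle $B_{j_i}\leq\cdots\leq B_{j_i}$, i.e.\ some $F^n$-image of a $u$-surface in $B_{j_i}$ covers $B_{j_i}$; only at this point does Brouwer apply (Lemma~\ref{tri-2}). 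The expansion hypothesis $\lambda>2$ and the finiteness of the distinguished leaf set are doing essential work that your topological sketch skips.

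A smaller point: your description of the Lorenz-like condition as ``exactly one contracting eigenvalue $\lambda_{ss}$'' is the three-dimensional picture. In the paper's setting $\dim W^{ss}(\sigma)=s$ can be arbitrary; Lorenz-like means $\dim W^s(\sigma)=\dim W^{ss}(\sigma)+1$, not that the strong-stable bundle is one-dimensional.
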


An equivalent version of this result is as follows.

Given $\Lambda \in M$ compact, recall that $\Lambda$ is an {\em attracting} set 
if $\Lambda=\cap_{t>0}X_t(U)$ for some compact neighborhood $U$ of it, where 
this neighborhood is often called {\em isolating block}.
It is well known that the isolating block $U$ can be chosen to be
positively invariant, i.e., $X_t(U)\subset U$ for all $t>0$.
We call a sectional hyperbolic set with the above property as a 
sectional hyperbolic attracting.

Thus, the Theorem \ref{thA} is equivalent to the result below.

\begin{maintheorem}
 Every sectional hyperbolic attracting of a $C^1$ vector field on a compact manifold has a periodic orbit.
\end{maintheorem}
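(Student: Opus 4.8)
The goal is to show that any sectional hyperbolic attracting set $\Lambda = \bigcap_{t>0} X_t(U)$ of a $C^1$ vector field on a compact manifold carries a periodic orbit. The strategy I would follow is an adaptation of the three-dimensional argument of \cite{bm} to arbitrary dimension, replacing the surface-specific tools (Poincaré--Bendixson / Peixoto-type arguments on cross-sections) with tools that survive in higher dimensions, namely: the existence of a singular partition into cross-sections adapted to the sectional hyperbolic structure, and a fixed-point / shadowing argument for the return map. First I would reduce to the case in which $\Lambda$ has a singularity: if $\Lambda$ has no singularity at all, then $\Lambda$ is a (uniformly) hyperbolic compact invariant set with no singularities that is also attracting, and then a standard argument (closing lemma for hyperbolic sets, or the fact that a nonempty hyperbolic attractor contains periodic orbits) already produces a periodic orbit. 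So I may assume $\si(X)\cap\Lambda\neq\emptyset$, and moreover that $\Lambda$ itself contains no periodic orbit, aiming at a contradiction.

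The core of the argument is then to analyze the dynamics near a singularity $\sigma\in\Lambda$. Using hyperbolicity of $\sigma$ together with the sectional expanding condition (so $\dim E^c_\sigma \ge 2$ and the unstable manifold $W^u(\sigma)$ has dimension $\ge 1$, with the strong stable foliation inside $W^s(\sigma)$ of codimension $\ge 2$ in $M$), I would set up a cross-section $\Sigma$ transverse to $X$ near $\sigma$ and study the first return map $R:\Sigma_0\to\Sigma$ defined on the subset $\Sigma_0$ of points whose forward orbit returns. The crucial structural input is that on a sectional hyperbolic set the return map is hyperbolic in the appropriate sense: there is a continuous splitting of $T\Sigma$ into a contracting direction (inherited from $E^s$) and a sectionally expanding complement, and the stable foliation of $\Sigma$ (the trace of the strong stable foliation of the flow) is a genuine continuous foliation with $C^1$ leaves. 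Quotienting $\Sigma_0$ by this stable foliation produces a metric space and an induced expanding-type map; the attracting hypothesis guarantees that $\Sigma_0$ together with its return dynamics is, up to the singular leaves (the leaves that hit $W^s(\sigma)$ and never return), a complete system on which one can run a fixed-point argument.

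The key step — and the main obstacle — is to produce, for the return map $R$, a point that returns and whose orbit stays in a region where $R$ is hyperbolically well-behaved, i.e. to find a periodic point of $R$. In dimension three one uses that the quotient is essentially one-dimensional (an interval map), so a periodic point is forced by elementary interval dynamics once one controls the finitely many discontinuities coming from the singularities. In higher dimensions the quotient has dimension $\dim E^c - 1 \ge 1$, and there is no Poincaré--Bendixson to fall back on; instead I would exploit the sectional expanding condition to show the induced map on the quotient is \emph{volume-expanding} on $2$-dimensional subspaces, and combine this with the attracting property: the induced map maps the quotient space into itself (modulo singular leaves) and expands area, so iterating it one obtains, via a compactness/covering argument (a pigeonhole on a finite cover of the quotient by small "rectangles" adapted to the stable foliation, exactly as in the Conley-index / filtration arguments used for singular hyperbolic sets), a rectangle that is eventually mapped across itself in the Markov sense. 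A standard hyperbolic fixed-point argument (graph transform on the rectangle, using domination of $E^s$ over the central bundle) then yields a fixed point of some iterate of $R$, hence a periodic orbit of $X$ inside $\Lambda$ — contradicting the assumption. I expect that making the "eventually mapped across itself" step rigorous, in particular controlling the interaction of the return orbits with the singular set (the set of leaves falling into $W^s(\sigma)$, whose geometry is governed by the sectional expanding condition forcing $W^s(\sigma)$ to be "thin" transversally), is where the real work lies, and where the higher-dimensional sectional hypothesis (as opposed to the $3$-dimensional one) is genuinely used.
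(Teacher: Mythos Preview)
Your plan is essentially the same strategy the paper carries out: reduce away the nonsingular (or non--Lorenz-like) case via the hyperbolic lemma plus shadowing, then build singular cross-sections near the Lorenz-like singularities, obtain a foliation-preserving (``$n$-triangular'') return map that is hyperbolic with expansion rate $\lambda>2$ on a cone field transverse to the stable foliation, and run a volume-growth plus pigeonhole argument on the quotient to produce a vertical band that some iterate maps across itself, finishing with a fixed-point argument (the paper uses Brouwer on the leaf, which is equivalent to your graph-transform suggestion). Your identification of the main difficulty---controlling the leaves whose images hit the boundary/singular set---is exactly what the paper encodes in its hypotheses (A1)--(A2) on the return map; the only minor refinement you are missing is that the reduction step should distinguish \emph{Lorenz-like} singularities from the others (a singularity with $W^{ss}(\sigma)=W^s(\sigma)$ can occur and must be ruled out of the $\omega$-limit sets separately, as in the paper's Proposition~\ref{prom}), rather than simply singular versus nonsingular.
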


Theorem \ref{thA} will be proved extending the arguments in \cite{bm} to the higher dimensional setting.
Indeed, in Section \ref{sech} we provide useful definitions for Lorenz-like singularity, singular-cross sections and 
triangular maps for higher dimensional case ($n$-triangular maps for short) in the sectional hyperbolic context. 
Also, we extend some definition, lemmas and propositions necessaries for the next sections. 
In Section \ref{PO TM} we give sufficient conditions for a hyperbolic $n$-triangular map
to have a periodic point. In Section \ref{AP} we proved that hyperbolic $n$-triangular maps satisfying these hypotheses 
have a periodic point and we proved the Theorem \ref{thA}.

\section{Singular cross-sections and triangular maps in higher dimensions}
\label{sech}

\noindent
In this section, we provide a definition of Lorenz-like singularity in the 
sectional hyperbolic context. Also, in the same way of \cite{a}, \cite{am}, let us 
to define singular cross-sections for the higher dimensional case. We set certain maps defined on a finite 
disjoint union of these singular cross-sections. As in \cite{bm}, 
they are discontinuous maps still preserving the continuous foliation 
(but not necessarily constant). Thus, on compact manifolds of dimension $n \geq 3$, 
particularly to group of these maps we shall call them $n$-triangular maps.


\subsection{Lorenz-like singularities and uselful results}
\label{sll1}

Let $M$ be a compact $n$-manifold, $n \geq 3$.
Fix $X\in {\cal X}^1(M)$, inwardly 
transverse to the boundary $\partial M$. We denotes
by $X_t$ the flow of $X$, $t\in I\!\! R$.\\

In our context, the singular cross-sections are strongly associated with the 
Lorenz-like singularities. This leads to provide a Lorenz-like singularity's 
definition in our context too.
 
For this purpose, we start by presenting the standard definition of hyperbolic set 
and some preliminary definitions.

\begin{defi}
\label{hyperbolic}
A compact invariant set $\Lambda$ of $X$ is {\em
hyperbolic}
if there are a continuous tangent bundle
invariant decomposition
$T_{\Lambda}M=E^s\oplus E^X\oplus E^u$ and positive constants
$C,\lambda$ such that

\begin{itemize}
\item $E^X$ is the vector field's
direction over $\Lambda$.
\item $E^s$ is {\em contracting}, i.e.,
$
\mid\mid DX_t(x) \left|_{E^s_x}\right.\mid\mid
\leq Ce^{-\lambda t}$, 
for all $x \in \Lambda$ and $t>0$.
\item $E^u$ is {\em expanding}, i.e.,
$
\mid\mid DX_{-t}(x) \left|_{E^u_x}\right.\mid\mid
\leq Ce^{-\lambda t},
$
for all $x\in \Lambda$ and $t> 0$.
\end{itemize}
A closed orbit is hyperbolic if it is also hyperbolic, as a compact invariant set. An attractor is hyperbolic if it is also a hyperbolic
set. 
\end{defi}

It follows from the stable manifold theory \cite{hps} that if $p$ belongs to a hyperbolic set $\Lambda$, then the following sets

\begin{tabular}{lll}
$W^{ss}_X(p)$ & = & $\{x:d(X_t(x),X_t(p))\to 0, t\to \infty\}$ and\\
$W^{uu}_X(p)$ & = & $\{x:d(X_t(x),X_t(p))\to 0, t\to -\infty\}$ \\
\end{tabular}\\
are $C^1$ immersed submanifolds of $M$ which are tangent at $p$ to the subspaces $E^s_p$ and $E^u_p$ of $T_pM$ respectively.
Similarly,

\begin{tabular}{lll}
$W^{s}_X(p)$ & = & $ \bigcup_{t\in I\!\! R}W^{ss}_X(X_t(p))$ and\\
$W^{u}_X(p)$ & = & $ \bigcup_{t\in I\!\! R}W^{uu}_X(X_t(p))$ \\
\end{tabular}\\
are also $C^1$ immersed submanifolds tangent to $E^s_p\oplus E^X_p$ and $E^X_p\oplus E^u_p$ at $p$ respectively.
Moreover, for every $\epsilon>0$ we have that

\begin{tabular}{lll}
$W^{ss}_X(p,\epsilon)$ & = & $\{x:d(X_t(x),X_t(p))\leq\epsilon, \forall t\geq 0\}$ and\\
$W^{uu}_X(p,\epsilon)$ & = & $\{x:d(X_t(x),X_t(p))\leq \epsilon, \forall t\leq 0\}$\\
\end{tabular}\\
are closed neighborhoods of $p$ in $W^{ss}_X(p)$ and $W^{uu}_X(p)$ respectively.\\

There is also a stable manifold theorem in the case when $\Lambda$ is sectional hyperbolic set.
Indeed, denoting by $T_{\Lambda}M=E^s_{\Lambda}\oplus E^c_{\Lambda}$ the corresponding the sectional hyperbolic
splitting over $\Lambda$ we have from \cite{hps} that
the contracting subbundle $E^s_{\Lambda}$
can be extended to a contracting subbundle $E^s_U$ in $M$. Moreover, such 
an extension is tangent to a continuous foliation denoted 
by $W^{ss}$ (or $W^{ss}_X$ to indicate dependence on $X$).
By adding the flow direction to $W^{ss}$ we obtain a continuous foliation 
$W^s$ (or $W^s_X$) now tangent to $E^s_U\oplus E^X_U$.
Unlike the Anosov case $W^s$ may have singularities, all of which being
the leaves $W^{ss}(\sigma)$ passing through the singularities $\sigma$ of $X$.
Note that $W^s$ is transverse to $\partial M$
because it contains the flow direction (which is transverse to $\partial M$
by definition). So, note the following remark
\begin{equation}
\begin{tabular}{l}
It turns out that every singularity $\sigma$ of a sectional hyperbolic set $\Lambda$ \\
satisfies $W^{ss}_X(\sigma)\subset W^s_X(\sigma)$.\\
Furthermore, there are two possibilities for such a singularity, namely,  \\
either $dim(W^{ss}_X(\sigma))=dim(W^s_X(\sigma))$ (and so $W^{ss}_X(\sigma)=W^s_X(\sigma)$) \\
or $dim(W^{s}_X(\sigma))=dim(W^{ss}_X(\sigma))+1$.\\
\end{tabular}
\label{sll}
\end{equation}
In the later case we call it Lorenz-like according to the following definition. 

\begin{defi}
\label{ll} 
We say that a singularity $\sigma$ of a sectional-Anosov flow $X$ is {\em Lorenz-like}
if
$dim (W^s(\sigma))=dim (W^{ss}(\sigma))+1.$
\end{defi}

Considering the above definitions, we show the following lemmas 
presenting an elementary but very useful dichotomy for the singularities
of a sectional hyperbolic sets.

First, we consider the called hyperbolic lemma.

\begin{lemma}
\label{hl}
Let $\Lambda$ be a sectional hyperbolic set of a $C^1$ vector field $X$ of $M$.
Then, there is a neighborhood $ {\cal U} \subset {\cal X}^1(M)$ of $X$ 
and a neighborhood $U \subset M$ of $\Lambda$ such that if $Y \in {\cal U}$,
every nonempty, compact, non singular, invariant set $H$
of $Y$ in $U$ is hyperbolic {\em saddle-type} (i.e. $E^s\neq 0$ and $E^u\neq 0$).
\end{lemma}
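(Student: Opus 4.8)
The plan is to argue by contradiction, combining a standard robustness property of sectional hyperbolicity with the fact that a nonsingular sectional hyperbolic set is automatically (uniformly) hyperbolic of saddle type. First I would recall that sectional hyperbolicity is an open condition in the following sense: since $\Lambda$ is a sectional hyperbolic set of $X$, there exist a neighborhood $\mathcal U\subset\mathcal X^1(M)$ of $X$, a neighborhood $U\subset M$ of $\Lambda$, and uniform constants $C,\lambda>0$ such that for every $Y\in\mathcal U$, the maximal invariant set of $Y$ in $U$ carries a dominated splitting $E^s\oplus E^c$ with $E^s$ $(C,\lambda)$-contracting, $E^s$ dominating $E^c$, and $E^c$ sectionally expanding with the same constants; this is the content of the continuity of sectional hyperbolicity and is taken from \cite{mor} (cf. \cite{a}, \cite{am}). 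Hence any compact invariant subset $H$ of $Y$ inside $U$ inherits a partially hyperbolic splitting $T_HM=E^s\oplus E^c$ with sectionally expanding center.

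Now fix such a $Y\in\mathcal U$ and let $H\subset U$ be nonempty, compact, nonsingular and $Y$-invariant. The key step is to upgrade the partially hyperbolic splitting on $H$ to a genuine hyperbolic splitting. Since $H$ contains no singularities, the vector field direction $E^Y$ is a well-defined continuous one-dimensional invariant subbundle of $E^c$ over $H$ (the flow has no fixed points in $H$, so $Y$ does not vanish there, and $\|DY_t\|$ along $E^Y$ grows subexponentially). On the other hand, sectional expansion forces the Jacobian of $DY_t$ along any two-plane in $E^c$ to grow like $C^{-1}e^{\lambda t}$; intersecting such a two-plane with $E^Y$ and using the subexponential behaviour of the flow direction, one concludes that $E^c/E^Y$ — equivalently, a complementary invariant subbundle $E^u$ obtained from the dominated splitting of $(DY_t)$ restricted to $E^c$ relative to $E^Y$ — is uniformly expanding. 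More precisely, one writes $E^c=E^Y\oplus E^u$ where $E^u$ is the continuous invariant bundle given by the finest dominated splitting refining $E^Y$ inside $E^c$ (this exists because $E^Y$ is one-dimensional and the center is sectionally expanding, hence volume-expanding transverse to the flow). Combining $\|DY_t|_{E^u}\|\ge m(DY_t|_{E^u})\ge |\det(DY_t|_{E^Y\oplus \ell})|/\|DY_t|_{E^Y}\|$ for lines $\ell\subset E^u$ with the sectional bound and the subexponential estimate on $E^Y$ yields $m(DY_{-t}|_{E^u})\le C'e^{-\lambda' t}$ for suitable $C',\lambda'>0$. Together with the contraction on $E^s$ and the domination, this gives exactly a hyperbolic splitting $T_HM=E^s\oplus E^Y\oplus E^u$ with $E^s\neq 0$ (it is nontrivial on $\Lambda$, hence on a neighborhood, hence on $H$) and $E^u\neq 0$ (since $\dim E^c\ge 2$ forces $\dim E^u\ge 1$); that is, $H$ is hyperbolic of saddle type, and the lemma follows by contradiction with any $H$ that failed to be so.

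The main obstacle is the middle step: producing the uniformly expanding subbundle $E^u$ from sectional expansion together with the (merely) subexponential growth along the flow direction. One must be careful that the flow direction need not be expanded or contracted, so $E^u$ is not literally "the rest of $E^c$" but rather the strong-unstable part of a dominated splitting inside $E^c$ that refines $E^Y$; establishing that this refinement exists and is uniform over all admissible $H\subset U$ and $Y\in\mathcal U$ requires the uniform constants from the openness of sectional hyperbolicity and a compactness argument, plus the standard fact that a one-dimensional subbundle with subexponential growth sitting inside a volume-expanding bundle splits off a complementary expanding bundle. The nonvanishing of $E^s$ and $E^u$ is then bookkeeping: $E^s$ has positive dimension on $\Lambda$ and this persists on $U$ by continuity of the splitting, while $\dim E^u=\dim E^c-1\ge 1$.
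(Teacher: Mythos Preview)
The paper does not actually prove this lemma: its entire proof reads ``See \cite{mpp2}.'' So there is no in-paper argument to compare against; the result is simply imported from the literature (the three-dimensional version is in \cite{mpp2}, and the sectional-hyperbolic generalization appears e.g.\ in \cite{mem} and \cite{lec}). Your sketch is essentially the standard argument behind those references: robustness of the sectional-hyperbolic splitting on a neighborhood, followed by the observation that on a nonsingular compact invariant set the flow direction sits inside $E^c$ and the sectional expansion forces a uniformly expanding complement, yielding a genuine hyperbolic splitting $E^s\oplus E^Y\oplus E^u$.

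Two points deserve a cleaner justification if you intend this as a self-contained proof rather than a sketch. First, you assert that $E^Y\subset E^c$ on $H$; this is true, but it requires an argument: since $H$ is nonsingular the flow direction has subexponential growth in both time directions, so by the domination $E^s\prec E^c$ it cannot have a nonzero component in $E^s$. Second, and more seriously, your passage from ``$E^c/E^Y$ is uniformly expanded'' to ``there is an invariant complement $E^u$'' is not immediate, and invoking ``the finest dominated splitting refining $E^Y$'' is not quite the right mechanism. What one actually uses is that subexponential growth along $E^Y$ together with uniform expansion of the quotient $E^c/E^Y$ gives a domination $E^Y\prec (E^c/E^Y)$; then a standard cone-field (or graph-transform) argument produces a continuous $DY_t$-invariant subbundle $E^u\subset E^c$ realizing this quotient. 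You flag this difficulty yourself in your final paragraph, which is good, but as written the body of the proof still asserts the equivalence ``$E^c/E^Y$ expanding $\Leftrightarrow$ invariant expanding complement exists'' without supplying it.
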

\begin{proof}
See (\cite{mpp2}).
\end{proof}

This following result examinating the sectional hyperbolic splitting 
$T_{\Lambda}M = E^s_{\Lambda} \oplus E^c_{\Lambda}$ of a sectional Anosov 
flow $X \in \mathcal{X}^1(M)$ appears in \cite{lec}. 

\begin{thm}
\label{t1}
Let $X$ be a sectional-Anosov flow $C^1$ for $M$.
Then, every $\sigma\in Sing(X)\cap M(X)$ satisfies
$$M(X)\cap W^{ss}_X(\sigma)=\{\sigma\}.$$
\end{thm}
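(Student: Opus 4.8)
The plan is to exploit the dichotomy in~(\ref{sll}) together with the hyperbolic lemma (Lemma~\ref{hl}) applied to the closed invariant subset $H=M(X)\cap W^{ss}_X(\sigma)$, and to derive a contradiction from the contracting nature of $W^{ss}$. First I would observe that the set $H:=M(X)\cap W^{ss}_X(\sigma)$ is a closed subset of the (immersed) strong stable manifold $W^{ss}_X(\sigma)$ containing $\sigma$; since $M(X)$ is compact and invariant, and $W^{ss}_X(\sigma)$ is forward-invariant under the flow (orbits in it converge to the orbit of $\sigma$), $H$ is nonempty and forward-invariant. The key geometric fact is that every point $x\in W^{ss}_X(\sigma)$ satisfies $d(X_t(x),X_t(\sigma))=d(X_t(x),\sigma)\to 0$ as $t\to+\infty$, and moreover the contraction is uniform on compact pieces: $W^{ss}_X(\sigma,\epsilon)$ is a genuine neighborhood of $\sigma$ in $W^{ss}_X(\sigma)$ on which forward orbits stay $\epsilon$-close to $\sigma$.

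Next I would argue by contradiction: suppose $H\neq\{\sigma\}$, so there is $x\in H$ with $x\neq\sigma$. Consider its full orbit closure $\overline{\{X_t(x):t\in\mathbb R\}}\subset M(X)$. Because $X_t(x)\to\sigma$ as $t\to+\infty$, the $\omega$-limit set of $x$ is $\{\sigma\}$; the trouble is the $\alpha$-limit side. Here I would split into two cases according to~(\ref{sll}). If $\sigma$ is \emph{not} Lorenz-like, then $W^{ss}_X(\sigma)=W^s_X(\sigma)$, and $W^s_X(\sigma)$ is transverse to $\partial M$ (it contains the flow direction); a point $x\in W^s_X(\sigma)\cap M(X)$ with $x\neq\sigma$ has a backward orbit which, by forward-invariance of $M(X)$ reversed, must escape $M$ through $\partial M$ in finite negative time unless its $\alpha$-limit set is a nonempty compact invariant subset of $M(X)$; invoking Lemma~\ref{hl} this $\alpha$-limit set, if nonsingular, is hyperbolic of saddle type, and if it contains a singularity $\sigma'$ we repeat the analysis. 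I would then use a dimension/index count: the leaf $W^{ss}_X(\sigma)$ has dimension $\dim E^s$, the same as the stable dimension of any nearby hyperbolic saddle set, forcing the backward orbit of $x$ to lie in a strong stable manifold, which is impossible for a nontrivial backward-recurrent orbit.

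The cleaner route, and the one I would actually carry out, is to use the sectionally expanding property directly. Pick $x\in H$, $x\neq\sigma$, and a two-dimensional subspace $L_x\subset E^c_x$ (available since $\dim E^c\geq 2$); then $|\det(DX_t(x)|_{L_x})|\geq C^{-1}e^{\lambda t}$ for all $t>0$, so areas in the central direction grow without bound along the forward orbit of $x$. But $X_t(x)\to\sigma$, so the forward orbit of $x$ accumulates only on $\sigma$, where $E^c_\sigma$ is the central eigenspace of the hyperbolic singularity; uniform area expansion along an orbit converging to a fixed point is incompatible with the boundedness of $DX_t$ near $\sigma$ on the relevant cone, unless no such $L_x$ exists — i.e. unless the orbit of $x$ is not genuinely in the region where the central bundle is two-dimensional, which contradicts $x\in M(X)$ and the partial hyperbolicity splitting being defined on all of $\Lambda\supset M(X)$. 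Making the last incompatibility precise is the main obstacle: one must show that area expansion at rate $e^{\lambda t}$ along the forward orbit cannot coexist with the orbit converging to $\sigma$, which I would handle by passing to the linearization at $\sigma$ (the singularity is hyperbolic, so $X$ is locally topologically — and after a $C^1$ change, smoothly along the center — conjugate to its linear part) and checking that the product of the two largest central exponents at $\sigma$ governs the area growth, giving a finite bound, contradicting the uniform lower bound $C^{-1}e^{\lambda t}$. This forces $H=\{\sigma\}$, proving the theorem.
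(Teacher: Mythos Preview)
The paper does not prove Theorem~\ref{t1}; it quotes the result from \cite{lec}. So there is no in-paper proof to compare against, but your proposal has a real gap.

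Your ``cleaner route'' via sectional expansion does not yield a contradiction. You pick a $2$-plane $L_x\subset E^c_x$, note $|\det(DX_t(x)|_{L_x})|\ge C^{-1}e^{\lambda t}$, and claim this is incompatible with $X_t(x)\to\sigma$. It is not: sectional expansion also holds at $\sigma$, so every $2$-plane in $E^c_\sigma$ has exponentially growing area under the linearised flow. An orbit can approach $\sigma$ along $E^s_\sigma$ while area in the invariant transversal bundle $E^c$ blows up --- this is exactly what happens in the geometric Lorenz model. Your claim that ``the product of the two largest central exponents at $\sigma$ \ldots\ gives a finite bound'' is therefore false (that sum of real parts is positive, by sectional expansion applied at $\sigma$), and $\|DX_t\|$ is certainly not bounded in $t$ near a hyperbolic singularity with an unstable direction. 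Your first approach, via Lemma~\ref{hl} and a ``dimension/index count'' on the $\alpha$-limit set, is too sketchy to evaluate and heads in an unnecessarily hard direction.

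The missing idea is the flow-direction argument. For any regular $x\in M(X)$ one has $X(x)\in E^c_x$: if $X(x)$ had a nonzero $E^s$-component, then running the flow backward (inside the compact invariant set $M(X)$) the domination forces $\|X(X_{-t}(x))\|\to\infty$, contradicting boundedness of $X$ on $M$. Now if $x\in M(X)\cap W^{ss}_X(\sigma)$ with $x\ne\sigma$, the leaf $W^{ss}_X(\sigma)$ is flow-invariant (since $X_t(\sigma)=\sigma$), so $X(x)\in T_xW^{ss}_X(\sigma)=E^s_x$. Thus $X(x)\in E^s_x\cap E^c_x=\{0\}$, so $x$ is a singularity; but a singularity $x\ne\sigma$ cannot lie in $W^{ss}_X(\sigma)$ since $d(X_t(x),\sigma)=d(x,\sigma)$ does not tend to zero. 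This two-line contradiction is what \cite{lec} presumably contains, and it is what your proposal is missing.
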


The following results appear in \cite{bm}, but this version is a modification  
for sectional hyperbolic sets in the higher dimensional case. 

\begin{thm} 
\label{tom}
Let $X$ be a sectional Anosov flow of $M$. Let $\sigma$ be a singularity of $M(X)$. 
If there is $x \in M(X) \setminus W^{ss}(\sigma)$ such that $\sigma \in \omega_X(x)$, 
then $\sigma$ is Lorenz-like and satisfies
$$M(X)\cap W^{ss}_X(\sigma)=\{\sigma\}.$$
\end{thm}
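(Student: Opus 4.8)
\textbf{Proof proposal for Theorem \ref{tom}.}

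The plan is to reduce this statement to Theorem \ref{t1} together with the Lorenz-like dichotomy in display \eqref{sll}, by showing that the only obstruction to applying them is precisely ruled out by the hypothesis that $\sigma\in\omega_X(x)$ for some $x\in M(X)\setminus W^{ss}(\sigma)$. First I would recall that by \eqref{sll} every singularity $\sigma$ of a sectional hyperbolic set satisfies $W^{ss}_X(\sigma)\subset W^s_X(\sigma)$, and exactly one of two cases occurs: either $W^{ss}_X(\sigma)=W^s_X(\sigma)$, or $\dim W^s_X(\sigma)=\dim W^{ss}_X(\sigma)+1$, the latter being by definition the Lorenz-like case. So the proof splits into two parts: first show $\sigma$ must be Lorenz-like, then derive the equality $M(X)\cap W^{ss}_X(\sigma)=\{\sigma\}$.

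For the first part I would argue by contradiction: suppose $\sigma$ is not Lorenz-like, so $W^{ss}_X(\sigma)=W^s_X(\sigma)$. The point is that in this case $W^s_X(\sigma)$ is a local ``stable manifold'' of full stable dimension with no extra center-stable direction, so every point whose forward orbit accumulates on $\sigma$ must in fact lie on $W^{ss}_X(\sigma)$. Concretely, since $\sigma\in\omega_X(x)$, the forward $X$-orbit of $x$ returns infinitely often to an arbitrarily small neighborhood of $\sigma$; using the local structure near a hyperbolic singularity together with $W^s_X(\sigma)=W^{ss}_X(\sigma)$ and the contraction/domination estimates of Definition \ref{d2}, one shows $x$ is forced onto the strong stable leaf, i.e. $x\in W^{ss}_X(\sigma)$, contradicting the choice of $x$. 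This is essentially the argument of \cite{bm} adapted to the higher-dimensional sectional hyperbolic splitting, where the role of the one-dimensional strong stable foliation is replaced by $W^{ss}$ of dimension $\dim E^s$; the domination of $E^c$ by $E^s$ is what guarantees that orbits not on $W^{ss}_X(\sigma)$ are pushed away from $\sigma$ in the center directions and hence cannot have $\sigma$ in their $\omega$-limit unless they start on $W^{ss}_X(\sigma)$.

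Once $\sigma$ is known to be Lorenz-like, the equality $M(X)\cap W^{ss}_X(\sigma)=\{\sigma\}$ is exactly the conclusion of Theorem \ref{t1}, provided we know $\sigma\in M(X)$. But $\sigma$ is a singularity of $M(X)$ by hypothesis, hence $\sigma\in\mathrm{Sing}(X)\cap M(X)$, so Theorem \ref{t1} applies verbatim and gives the desired equality. Thus the only real content is the first part. I expect the main obstacle to be making rigorous the claim that, in the non-Lorenz-like case, accumulation at $\sigma$ forces membership in $W^{ss}_X(\sigma)$: this requires a careful local analysis near the hyperbolic singularity $\sigma$ — choosing a small flow-box or Hartman--Grobman-type linearizing neighborhood, controlling how the orbit of $x$ enters and exits it, and invoking the partially hyperbolic estimates to rule out the center directions — rather than any single slick appeal to an earlier result. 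Everything else is bookkeeping with the dichotomy \eqref{sll} and a direct citation of Theorem \ref{t1}.
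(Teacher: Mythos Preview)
Your overall structure---split via the dichotomy \eqref{sll}, get the equality from Theorem \ref{t1}---matches the paper. The gap is in how you derive the contradiction in the non-Lorenz-like case.

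You aim to show that $W^s_X(\sigma)=W^{ss}_X(\sigma)$ together with $\sigma\in\omega_X(x)$ forces $x\in W^{ss}_X(\sigma)$, via a local analysis near $\sigma$ and the domination estimates. This target is wrong: no purely local argument, and no amount of domination, prevents an orbit from leaving a neighborhood of a hyperbolic singularity and returning infinitely often via some global mechanism. Domination only says $E^s$ contracts faster than $E^c$; it does not say orbits off $W^{ss}_X(\sigma)$ are pushed away \emph{permanently}. So the implication ``$\sigma\in\omega_X(x)\Rightarrow x\in W^{ss}_X(\sigma)$'' is not a local fact and cannot be obtained the way you describe. You correctly sense this is the hard step, but the difficulty is not technical---it is that the step, as formulated, is unprovable without already invoking Theorem \ref{t1}.

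The paper's argument is both shorter and uses Theorem \ref{t1} for the Lorenz-like conclusion as well, not just for the equality. Assume $\sigma$ is not Lorenz-like, so $W^s_X(\sigma)=W^{ss}_X(\sigma)$. Since $x\notin W^s_X(\sigma)$ but $\sigma\in\omega_X(x)$, the orbit of $x$ enters and exits arbitrarily small neighborhoods of $\sigma$ infinitely often; a standard local argument at a hyperbolic singularity then gives
\[
\omega_X(x)\cap\bigl(W^s_X(\sigma)\setminus\{\sigma\}\bigr)\neq\emptyset.
\]
Since $\omega_X(x)\subset M(X)$ and $W^s_X(\sigma)=W^{ss}_X(\sigma)$, this yields $M(X)\cap\bigl(W^{ss}_X(\sigma)\setminus\{\sigma\}\bigr)\neq\emptyset$, contradicting Theorem \ref{t1}. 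So the right object to locate in $W^{ss}_X(\sigma)\setminus\{\sigma\}$ is a point of $\omega_X(x)$, not $x$ itself, and the contradiction comes from Theorem \ref{t1} rather than from forcing $x$ onto the strong stable leaf.
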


\begin{proof}
The equality follows from Theorem \ref{t1}. We assume that $M(X)$ is connected for, otherwise, we consider
the connected components. Suppose that $\sigma \in M(X) \cap Sing(X)$ satisfies
$\sigma \in \omega_X(x)$for some $x \in M(X) \setminus W^{ss}(\sigma)$. Let us
prove that $\sigma$ is Lorenz-like. Since $M(X)$ is maximal invariant of $X$ 
we have $\omega_X(x) \in M(X)$ and so $\sigma \in M(X)$.
Assume by contradiction that $\sigma$ is not Lorenz-like. Then, by (\ref{sll}) we have that 
$dim(W^{ss}(\sigma)) = dim(W^{s}(\sigma))$ and so $W^{ss}(\sigma) = W^{s}(\sigma)$.
Since $x \notin W^{ss}(\sigma)$, one has 
$\omega_X(x) \cap (W^s(\sigma) \setminus \left\{\sigma\right\}) \neq \emptyset$.
But recall that $\omega_X(x) \in M(X)$ and beside with $W^{ss}(\sigma) = W^{s}(\sigma)$,
we obtain that $$M(X)\cap (W^{ss}_X(\sigma) \setminus \{\sigma\}) \neq \emptyset$$
contradicting the equality in Theorem \ref{t1}. This proves the result.
\end{proof}

\begin{prop}
\label{prom}
Let $X$ be a sectional Anosov flow of $M$. If $M(X)$ has no Lorenz-like singularities, 
then $M(X)$ has a periodic orbit. 
\end{prop}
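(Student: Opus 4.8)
The plan is to produce the periodic orbit inside the $\omega$-limit set of a carefully chosen regular point of $M(X)$. First I would check that $M(X)$ contains a regular (non-singular) point. In the boundaryless case $M(X)=M$ and this is clear; if $\partial M\neq\emptyset$ and $M(X)$ had no regular point, then, the singularities of $X$ being hyperbolic and hence isolated zeros, $M(X)$ would be a finite set of singularities. But for any $\sigma\in M(X)\cap\mathrm{Sing}(X)$ the sectional expansion of $E^c_\sigma$ forces $\dim W^{uu}_X(\sigma)\geq 1$, and since $X$ is inwardly transverse to $\partial M$ the backward orbit of any point of $W^{uu}_{\mathrm{loc}}(\sigma)$ stays near $\sigma$; hence $W^{uu}_{\mathrm{loc}}(\sigma)\subset M(X)$, which contradicts finiteness. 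So fix a regular point $x\in M(X)$ and consider $\omega_X(x)$, a nonempty compact invariant subset of $M(X)$.

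The key reduction is that $\omega_X(x)$ contains no singularity, and this is where the hypothesis on $M(X)$ is used. Suppose $\sigma\in\omega_X(x)\cap\mathrm{Sing}(X)$. Since $x\in M(X)$ and $\sigma\in\omega_X(x)$, if we had $x\notin W^{ss}_X(\sigma)$ then Theorem \ref{tom} would force $\sigma$ to be Lorenz-like, against the hypothesis; hence $x\in M(X)\cap W^{ss}_X(\sigma)$, which equals $\{\sigma\}$ by Theorem \ref{t1}, so $x=\sigma$, contradicting that $x$ is regular. Therefore $\omega_X(x)$ is a nonempty, compact, non-singular, invariant set contained in $M(X)$, hence contained in the neighborhood $U$ of Lemma \ref{hl}; applying that lemma with $Y=X$ shows that $\omega_X(x)$ is a hyperbolic set of saddle type.

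It remains to extract a periodic orbit from the hyperbolic set $\omega_X(x)$, which is the main technical step. Being an $\omega$-limit set, $\omega_X(x)$ is chain transitive, and concretely: fixing $p\in\omega_X(x)$ there are times $t_n\to\infty$ with $X_{t_n}(x)\to p$, while the whole tail $\{X_t(x):t\geq t_n\}$ is eventually confined to an arbitrarily small neighborhood $V$ of $\omega_X(x)$ in which the hyperbolic structure and shadowing persist. Choosing $n<m$ large enough that $X_{t_n}(x)$ and $X_{t_m}(x)$ are very close, the orbit arc $\{X_t(x):t_n\leq t\leq t_m\}$ is an almost-closed orbit segment lying in $V$, and the Anosov closing lemma for flows furnishes a genuine periodic orbit $\gamma$ that $\epsilon$-shadows it. For $\epsilon$ small, $\gamma$ lies in $M$ (it is close to $\omega_X(x)\subset\mathrm{int}(M)$), so, being a compact invariant set contained in $M$, we get $\gamma\subset\bigcap_{t\geq 0}X_t(M)=M(X)$. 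This produces the desired periodic orbit. I expect the delicate point to be precisely this last step: carrying out the closing lemma in the reparametrized flow setting and ensuring that the shadowing periodic orbit stays close enough to $M(X)$ to be reabsorbed into it.
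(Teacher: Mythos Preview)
Your argument is correct and follows essentially the same route as the paper: choose a regular point of $M(X)$, use Theorem~\ref{tom} together with Theorem~\ref{t1} to rule out singularities in its $\omega$-limit set, apply Lemma~\ref{hl} to get hyperbolicity, and then invoke the shadowing/closing lemma to produce a periodic orbit that must lie in $M(X)$. The only differences are cosmetic: you supply a justification for the existence of a regular point (which the paper tacitly assumes), and your final worry about the closing step is unnecessary---the paper dispatches it in one line by citing the standard Shadowing Lemma for flows.
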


\begin{proof}
Let $x$ be a point in  $M(X) \setminus Sing(X)$. 
We claim that $\omega_X(x)$ has no singularities. Indeed,
suppose by contradiction that $\omega_X(x)$ has a singularity $\sigma$. 
By hypothesis $M(X)$ has no Lorenz-like singularities and so $\sigma$ is not Lorenz-like too. 
Hence $W^{ss}(\sigma) = W^{s}(\sigma)$ by (\ref{sll}), and by using the Theorem \ref{t1} one has 
$$M(X)\cap (W^{s}_X(\sigma) \setminus \{\sigma\}) = \emptyset$$
It follows in particular that  $(W^{s}_X(\sigma) \setminus \{\sigma\}) $
is no belongs to $M(X)$. Since $x \in (M(X)\setminus Sing(X))$ we conclude that 
$x \notin W^{s}_X(\sigma)=W^{ss}_X(\sigma)$. It follows from Theorem \ref{tom} that 
$\sigma$ is Lorenz-like. This is a contradiction and the claim follows.

Now we conclude the proof of the proposition. Clearly $\omega_X(x) \subset M(X)$ since $M(X)$ is
compact. The claim and \ref{hl} imply that $\omega_X(x)$ is a hyperbolic set. It follows from
the Shadowing Lemma for flows \cite{haka} that there is a periodic orbit of $X_t$ close to
$\omega_X(x)$. Since $M(X)$ is the maximal invariant, we have that such a periodic orbit is 
contained in $M(X)$. Then we obtain the result.
\end{proof}

\subsection{Singular cross-section in higher dimension}
\label{scs}

Here, we will define singular cross-section in the higher
dimensional context. First, we will denote a cross-section by $\Sigma$ and 
its boundary by $ \partial \Sigma$. 
Also, the hypercube $I^k=\left[-1,1\right]^k$ will be 
submanifold of dimension $k$ with $k \in \nat$. 

Let $\sigma$ be a Lorenz-like singularity. Hereafter, 
we will denote $dim(W^{ss}_X(\sigma))=s$, $dim(W^{u}_X(\sigma))=u$ and 
therefore $dim(W^{s}_X(\sigma))=s+1$ by definition. 
Moreover $W^{ss}_X(\sigma)$ separates $W^s_{loc}(\sigma)$
in two connected components denoted by $W^{s,t}_{loc}(\sigma)$ 
and $W^{s,b}_{loc}(\sigma)$ respectively. 

Thus, we begin by considering $B^u[0,1] \approx I^u$ and $B^{ss}[0,1] \approx I^s$ where 
$B^{ss}[0,1]$ is the ball centered at zero and radius $1$ contained in $\re^{dim(W^{ss}(\sigma))}=\re^{s}$ and 
$B^{u}[0,1]$ is the ball centered at zero and radius $1$ contained in $\re^{dim(W^{u}(\sigma))}=\re^{n-s-1}=\re^{u}$.

\begin{defi}
\label{n1}
A {\em singular cross-section} of a Lorenz-like singularity 
$\sigma$ consists of a pair of submanifolds $\Sigma^t, \Sigma^b$, where 
$\Sigma^t, \Sigma^b$ are cross-sections such that \\

\begin{tabular}{c}
$\Sigma^t$ is transversal to $W^{s,t}_{loc}(\sigma)$ and 
$\Sigma^b$ is transversal to $W^{s,b}_{loc}(\sigma)$.\\ 
\end{tabular}\\

\end{defi} 

Note that every singular cross-section
contains a pair singular submanifolds $l^t,l^b$ 
defined as the intersection of the local stable manifold of $\sigma$ with $\Sigma^t,\Sigma^b$ 
respectively and, additionally, $dim(l^*)=dim(W^{ss}(\sigma))$ $(*=t,b)$.\\

Thus a singular cross-section $\Sigma^*$ will be a {\em hypercube of dimension $(n-1)$},   
i.e., diffeomorphic to $B^u[0,1] \times B^{ss}[0,1]$.  
Let $f: B^u[0,1] \times B^{ss}[0,1] \longrightarrow \Sigma^*$ be the diffeomorphism, 
such that $$f(\left\{0\right\} \times B^{ss}[0,1])=l^*$$ and $\left\{0\right\}=0 \in \re^u$.
Define
$$\partial \Sigma^* = \partial^h \Sigma^* \cup \partial^v \Sigma^*$$ by:

\begin{tabular}{l}
$\partial^h \Sigma^* = \left\{\right.$union of the boundary submanifolds
 which are transverse to $l^*$ $\left.\right\}$ and\\
$\partial^v \Sigma^* = \left\{\right.$union of the
boundary submanifolds which are parallel to $l^*$ $\left.\right\}.$
\end{tabular}\\

From this decomposition we obtain that 
$$\partial^h \Sigma^*  =  (I^u \times [\cup_{j=0}^{s-1} (I^j \times \left\{-1\right\} \times I^{s-j-1})])
\bigcup (I^u \times[\cup_{j=0}^{s-1} (I^j \times \left\{1\right\} \times I^{s-j-1})])$$
$$\partial^v \Sigma^*  =  ([\cup_{j=0}^{u-1} (I^j \times \left\{-1\right\} \times I^{u-j-1})] \times I^s)
\bigcup ([\cup_{j=0}^{u-1} (I^j \times \left\{1\right\} \times I^{u-j-1})] \times I^s),$$
where $I^0 \times I=I$.\\

Hereafter we denote $\Sigma^* = B^u[0,1] \times B^{ss}[0,1]$.


\subsubsection{Refinement of singular cross-sections and induced foliation}

Hereafter, $X$ denotes a sectional Anosov flow of a compact $n$-manifold $M$, $n \geq 3$, 
$X \in \mathcal{X^1(M)}$. Let $M(X)$ be the maximal invariant of $X$.

In the same way of \cite{bm}, we obtain an induced foliation $\f$ on $\Si$ 
by projecting $\f^{ss}$ onto $\Si$, where $\f^{ss}$ denotes the invariant continuous 
contracting foliation on a neighborhood of $M(X)$ \cite{hps}.   

For the refinement, since $\Sigma^*= B^u[0,1] \times B^{ss}[0,1]$, 
we will set up a family of singular cross-sections as follows: 
Given $0<\Delta\leq 1$ small, we define $\Sigma^{*,\Delta}=B^u[0,\Delta] \times B^{ss}[0,1]$,
such that
$$l^* \subset \Sigma^{*,\Delta}\subset \Sigma^* ,\,\,\, i.e$$
$$(l^*=\{0\}\times B^{ss}[0,1]) \subset
(\Sigma^{*,\Delta}=B^u[0,\Delta] \times B^{ss}[0,1]) \subset 
(\Sigma^*= B^u[0,1] \times B^{ss}[0,1]),$$
where we fix a coordinate system $(x^*,y^*)$
in $\Sigma^*$ ($*=t,b$). We will assume that $\Sigma^*=\Sigma^{*,1}$.


\subsection{$n$-Triangular maps}

We begin by reminding the three-dimensional case \cite{bm}, where the authors choose the 
cross-sections as copies of $\left[0,1\right] \times \left[0,1\right]$ and they define 
maps on a finite disjoint union of these one copies called {\em triangular maps}. This concept 
is frequently used by maps on $\left[0,1\right] \times \left[0,1\right]$ preserving the 
constant vertical foliation. Also, they assume two hypotheses imposing certain amount of 
differentiability close to the points whose iterates fall eventually in the interior of $\Sigma$.

For the higher dimensional case, we will define a certain maps on a finite 
disjoint union of singular cross-sections $\Sigma$. Here, we will modify the triangular map's definition and we 
will impose some suitable properties in order to denine a {\em triangular hyperbolic map}. 
These maps could be discontinuous and they will preserve still the continuous foliation 
(but not necessarily constant). Thus, on compact manifolds of dimension $n \geq 3$, 
particularly to group of these maps we shall call them $n$-triangular maps.

By using the above definitions about singular cross-sections, 
we will provide the following definitions.

\begin{defi}
Let $\Sigma$ be a disjoint union of finite singular cross-sections $\Sigma_i$, $i=1,\dots,k.$. 
We denote by $l_{0_i}$ to the singular leaf of the singular cross-section $\Sigma_i$.
Here $L_0$ stands the union of singular leaves, i.e., $$L_0=\bigcup_{i=1}^n l_{0_i}.$$
Recall that $\partial^v \Sigma_i$ is the union of the
boundary submanifolds which are parallel to $l_{0_i}$. In the same way we set by $\partial^v \Sigma$ as 
$$ \partial^v \Sigma = \bigcup_{i=1}^n \partial^v \Sigma_i$$
\end{defi}

Hereafter, given a map $F$ we will denote its domain by $Dom(F)$. 

\begin{defi}
Let $F: Dom(F) \subset \Sigma \rightarrow \Sigma$ be a map $x$ a point in $Dom(F)$. We say that 
$x$ is a {\em periodic point} of $F$ if there is $n \geq 1$ such that $F^j(x) \in Dom(F)$ 
for all $0\leq j \leq n-1$ and $F^n(x) = x$
\end{defi}

\begin{defi}
We say that a submanifold $c$ of $\Sigma$ is a {\em $k$-surface} if it is the image 
of a $C^1$ injective map $c : Dom(c) \subset \re^k \rightarrow \Sigma$, with $Dom(c)$ 
being $I^k$ and $k \leq n-1$. For simplicity, hereafter $c$ stands the image of this one map. 
A $k$-surface $c$ is {\em vertical} if it is the graph of a $C^1$ map $g: I^{n-k-1}\rightarrow I^{k}$, 
i.e., $c=\left\{(g(y),y): y \in I^k\right\} \subset \Sigma$.   
\end{defi}

\begin{defi}
A continuous foliation $\f_i$ on a component $\Si_i$ of $\Si$ is called 
{\em vertical} if its leaves are vertical $s$-surfaces and $\partial ^v \Si \subset \f_i$, 
where $s=dim (B^{ss}[0,1])$. A {\em vertical foliation} $\f$ of $\Si$ is a foliation which restricted to each component $\Si_i$ 
of $\Si$ is a vertical foliation.
\end{defi}

It follows from the definition above that the leaves $L$ of
a vertical foliation $\f$ are vertical $s$-surfaces hence differentiable ones.
In particular, the tangent space $T_xL$ is
well defined for all $x\in L$. 

\begin{rk}
Note that, given a singular cross-section $\Si$ equipped with a vertical foliation $\f$, 
one has that $dim (\f) = dim (B^{ss}[0,1])=s$, and each leaf $L$ of 
$\f$ has the same dimension of $W^{ss}(\sigma)$, being $\sigma$ the Lorenz-like singularity 
associated to $\Si$.
\end{rk}

\begin{defi}
Let $F: Dom(F)\subset \Sigma \to \Sigma$ a map and $\f$ be a vertical foliation
on $\Sigma$. We say that $F$ {\em preserves} $\f$ if
for every leaf $L$ of $\f$ contained in $Dom(F)$ there is a leaf $f(L)$ of $\f$ such that
$F(L)\subset f(L)$ and the restricted map $F/_L:L \to f(L)$ is continuous.
\end{defi}

If $\f$ is a vertical foliation on $\Sigma$ a subset $B\subset \Sigma$
is a {\em saturated set} for $\f$ if it is union of leaves of $\f$.
We shall write $\f$-saturated for short.

\begin{defi}[{\bf $n$-Triangular map}]
\label{tim}
A map $F:Dom(F)\subset \Sigma\to \Sigma$ is called {\em $n$-triangular}\index{Map!Triangular} if
it preserves a vertical foliation ${\mathcal F}$
on $\Sigma$ such that $Dom(F)$ is $\mathcal F$-saturated and $dim(\Si)=n-1$, with $n \geq 3$. 
Note that a $3$-triangular map is the classical triangular map.
\end{defi}

\subsection{Hyperbolic $n$-triangular maps}

In the same way of \cite{bm}, in order to find periodic points for 
$n$-triagular maps, also we introduce some kind of hyperbolicity for these maps. 
The hyperbolicity will be defined through cone fields in $\Sigma$:
We denote by $T\Sigma$ the tangent bundle of $\Sigma$. 
Given $x\in \Sigma$, $\alpha>0$ and a linear subspace $V_x \subset T_x \Sigma$ we denote
by $C_\alpha(x,V_x) \equiv C_\alpha(x)$ the cone
around $V_x$ in $T_x\Sigma$ with inclination $\alpha$, namely
$$
C_\alpha(x)= \{v_x\in T_x\Sigma:\angle(v_x,V_x)\leq \alpha\}.
$$
Here $\angle(v_x,V_x)$ denotes the angle between a vector $v_x$ and the subspace $V_x$.
A {\em cone field} in $\Sigma$ is a continuous map
$C_\alpha:x\in \Sigma\to C_\alpha(x) \subset T_x \Sigma$,
where $C_\alpha(x)$ is a cone
with constant inclination $\alpha$ on $T_x\Sigma$.
A cone field $C_\alpha$ is called
{\em transversal}\index{Cone!Field!Transverse}
to a vertical foliation ${\mathcal F}$ on $\Sigma$ if
$T_xL$ is not contained in $C_\alpha(x)$
for all $x \in L$ and all $L \in {\mathcal F}$.

Now we can define hyperbolic $n$-triangular map.

\begin{defi}[{\bf Hyperbolic $n$-triangular map}]
\label{lambda-hyperbolic}
Let $F:Dom(F)\subset \Sigma\to\Sigma$ be a $n$-triangular map with associated
vertical foliation ${\mathcal F}$.
Given $\lambda > 0$ we say that
$F$ is {\em $\lambda$-hyperbolic}\index{Map!Triangular!$\lambda$-hyperbolic}
if there is a cone field $C_\alpha$ in $\Sigma$ such that:

\begin{enumerate}
\item
$C_\alpha$ is transversal to ${\mathcal F}$.
\item
If $x\in Dom(F)$ and $F$ is differentiable at $x$,
then
$$
DF(x)(C_\alpha(x))\subset Int(C_{\alpha/2}(F(x)))
$$
and
$$
\mid\mid DF(x)\cdot v_x\mid\mid \geq \lambda\cdot \mid\mid v_x\mid\mid,
$$
for all $v_x\in C_\alpha(x)$.
\end{enumerate}
\end{defi}

\section{Periodic points for hyperbolic $n$-triangular maps}
\label{PO TM}

\noindent

In the three dimensional case \cite{bm}, the authors impose certain conditions or properties so-called 
hypotheses (H1) and (H2) on triangular maps. With this conditions the periodic point arose on the triangular map. 
Therefore, the general tools for searching our definitions is trying and reproducing these hypotheses,
in the higher dimensional setting. Here, the topology turns out to play a significant role in this extension,
imposing certain restrictions on the manifolds, kind of foliations and singular cross-sections one may have. 

In this section we give sufficient conditions for a hyperbolic $n$-triangular map
to have a periodic point. 

\subsection{Hypotheses {\bf (A1)-(A2)}}
\label{H}
They impose some regularity around those leaves whose iterates {\em eventually fall into}
$\Sigma\setminus(\partial ^v \Si)$.
To state them we will need the following definition.
If ${\mathcal F}$ is foliation we use the notation $L\in {\mathcal F}$ to mean that
$L$ is a leaf of ${\mathcal F}$.

\begin{defi}
\label{n(L)}
Let $F:Dom(F)\subset \Sigma\to \Sigma$ be a triangular map such that $\partial ^v \Si \subset Dom(F)$.
For all $L\in {\mathcal F}$ contained in $Dom(F)$ we define the (possibly $\infty$) number
$n(L)$ as follows:

\begin{enumerate}
\item
If $F(L)\subset \Sigma\setminus(\partial ^v \Si)$ we define $n(L)=0$.
\item
If $F(L)\subset \partial ^v \Si$ we define
$$
n(L)=\sup\{n\geq 1:F^i(L)\subset Dom(F) \mbox{ and }
$$
$$
F^{i+1}(L)\subset \partial ^v \Si, \forall 0\leq i\leq n-1\}.
$$
\end{enumerate}
\end{defi}

Essentially $n(L)+1$ gives the first non-negative iterate of $L$
falling into $\Sigma\setminus (\partial ^v \Si)$.

\begin{rk}
\label{vl}
On the other hand, note that if $n(L_*) \geq 1$ for $L_* \in \f$ contained in $Dom(F)$, 
for every neighborhood $S$ of $L_*$, $F(S) \cap \partial ^v \Si \neq \emptyset$. 
We denote $$V_{L_*}(S) \equiv V_{L_*} = F(S) \cap \partial ^v \Si.$$ 
Therefore $V_{L_*} \neq \emptyset$ and if $L_* \notin \partial ^v \Si $, 
$V_{L_*}$ splits $F(S)$ in two connected components $S'_1,S'_2$. 
It shows that there exists three connected components $S_0,S_1, S_2$ of $S$ 
such that 

\begin{center}
\begin{tabular}{l}
$S= S_0 \cup S_1 \cup S_2$ and  \\
$F(S_0) = V_{L_*}$, \quad $F(S_1) = S'_1$,  \quad and  $F(S_2) = S'_2$ .\\
\end{tabular}
\end{center}
\end{rk}

Given $L \in \f$ contained in $Dom(F)$, the number $n(L)$ and the 
neighborhood $V_{L_*}$ play fundamental role in the following definition.

\begin{defi}[{\bf Hypotheses (A1)-(A2)}]
\label{hypocampo}
Let $F\colon Dom(F)\subset \Sigma\to \Sigma$ be
a $n$-triangular map such that $\partial ^v \Si \subset Dom(F)$. We say that $F$ satisfies:

\begin{description}
\item[(A1)]
If $L\in {\mathcal F}$ satisfies $L\subset Dom(F)$
and $n(L)=0$, then there is a ${\mathcal F}$-saturated neighborhood $S$
of $L$ in $\Sigma$ such that the restricted map $F/_S$ is $C^1$.

\item[(A2)]
If $L_*\in {\mathcal F}$ satisfies $L_*\subset Dom(F)$, $1\leq n(L_*)<\infty$ and
$$ F^{n(L_*)}(L_*)\subset Dom(F),$$
then there is a connected neighborhood $S \subset Dom(F)$ of $L_{*}$ such that 
$S=S_0\cup S_1 \cup S_2$ (see Remark (\ref{vl})) and the connected
components $S_1,S_2$ (possibly equal if $L_*\subset \partial ^v \Si$) beside $V_{L_*}$ 
satisfying the properties below:

\begin{enumerate}
\item
Both $F(S_1)$ and $F(S_2)$ are contained
in $\Sigma \setminus (\partial ^v \Si)$.

\item
\label{fvl}
$F^i(V_{L_*}) \subset \partial ^v \Si$ for all $0 \leq i \leq n(L_*)$, i.e.,
as $V_{L_*}$ is $\f$-saturated, for any $L' \in V_{L_*}$ one has that 
$n(L')=n(L_*)-1$ (or $n(L')=n(L_*)$ for the case $L_* \in \partial ^v \Si$)  

\item
For each $j\in \{0,1,2\}$, there is $0 \leq n^j(L_*) \leq n(L_*)+1$ 
such that if $y_l\in S_{j}$ is a sequence converging to $y \in L_{*}$,
then $F(y_l)$ is a sequence converging to $F^{n^j(L_*)}(y)$.
If $n^j(L_*)=0$, then $F$ is $C^1$ in $S_j \cup V_{L_*}$. 
Note that $n^0(L_*) = n(L_*)$ by \ref{fvl}.

\item
If $L_* \subset \Sigma \setminus (\partial ^v \Si)$
(and so $S_1\neq S_2$), then
either $n^1(L_*)=0$ and $n^2(L_*) \geq 1$ or
$n^1(L_*) \geq 1$ and $n^2(L_*)=0$.  

\end{enumerate}
\end{description}
\end{defi}

\subsection{Existence on hyperbolic $n$-triangular maps}

The following theorem will deal with the existence of periodic
points for hyperbolic $n$-triangular maps satisfying (A1) and (A2). 
Since the conditions (A1) and (A2) generalize the conditions in \cite{bm}, 
also we have that the three dimensional Lorenz attractor return map is an 
example for us. Recall the Lorenz attractor return map has a periodic point 
and it is a $\lambda$-hyperbolic $n$-triangular map satisfying (A1) and (A2) 
with $\lambda$ large and $Dom(F) = \Si \setminus L_0$. 
Indeed, the main motivation is show the following theorem for higher dimensional case. 
More precisely, 

\begin{thm}
\label{op2}
Let $F$ be a $\lambda$-hyperbolic $n$-triangular map satisfying (A1) and (A2) with 
$\lambda > 2$ and $Dom(F) = \Si \setminus L_0$. Then, $F$ has a periodic point.
\end{thm}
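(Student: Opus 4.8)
The plan is to find a periodic point by a fixed-point/continuity argument along the lines of the three-dimensional case in \cite{bm}, exploiting the hyperbolicity (expansion plus cone invariance) together with the regularity furnished by (A1) and (A2). First I would set up the combinatorial bookkeeping: for each leaf $L$ of $\f$ contained in $Dom(F)=\Si\setminus L_0$, the integer $n(L)$ records how long the forward orbit of $L$ is trapped in $\partial^v\Si$ before it first lands in $\Si\setminus(\partial^v\Si)$. Since $F$ preserves the vertical foliation, $F$ descends to a map on the leaf space of each component, and one can build from $F$ an auxiliary \emph{leaf map} (essentially ``apply $F$ repeatedly until you leave $\partial^v\Si$''); (A1) and (A2) are exactly what is needed to make this leaf map piecewise $C^1$ with controlled one-sided limits near the bad leaves. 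I would then transfer the problem to producing a periodic leaf: if $L$ is periodic for the leaf map, then the restriction of an appropriate iterate $F^m$ to that vertical leaf $L$ is a continuous self-map (or a map into a leaf of the same combinatorial type), and a separate argument on the leaf (contraction, since $\f$ is the projection of $W^{ss}$, or just Brouwer on the disk $B^{ss}[0,1]$) yields an honest periodic point of $F$.

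The core of the argument is to produce the periodic leaf. Here I would use the cone field $C_\alpha$ transversal to $\f$: pick a $1$-surface (a short curve) $c$ transverse to $\f$ whose tangent lies in $C_\alpha$, so that $c$ meets each leaf in at most one point and parametrizes a piece of the leaf space. By condition (2) of $\lambda$-hyperbolicity, $DF$ maps $C_\alpha$ into $C_{\alpha/2}$ and expands vectors by $\lambda>2$; hence $F$ (and its iterates, and the leaf map assembled from them) expands transverse length by a definite factor. The standard consequence is an ``expanding interval map'' picture on the leaf-space parameter: one gets a subinterval that is mapped over itself with expansion, and an expanding map of an interval onto a set containing it has a fixed point. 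The subtlety is that the leaf map is only \emph{piecewise} continuous, with the pieces $S_0,S_1,S_2$ from Remark \ref{vl} and the jumps described by the numbers $n^j(L_*)$ in (A2); I would use the case analysis in (A2)(4) — namely that for a leaf $L_*\subset\Si\setminus(\partial^v\Si)$ one of the two side components has $n^j=0$ while the other has $n^j\ge1$ — to argue that on at least one side the map is continuous up to $L_*$ with the ``right'' expansion, so that a covering/fixed-point argument still goes through across the discontinuity. The condition $\lambda>2$ should enter precisely to beat the factor-$2$ loss incurred when a target interval is cut into two pieces by some $V_{L_*}$: expansion by more than $2$ guarantees that at least one of the two halves is still long enough to cover the original.

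The main obstacle, as I see it, is handling these discontinuities of the leaf map uniformly — i.e., showing that iterating the ``go to the next non-$\partial^v\Si$ landing'' procedure does not destroy the expansion estimate and that the relevant nested intervals do not shrink to a bad leaf. Concretely, one must rule out that the candidate fixed leaf always falls on the set $L_0=\Si\setminus Dom(F)$ or on the boundary $\partial^v\Si$ where the dynamics is undefined or merely auxiliary; this is where (A2)(2)–(3) (the precise propagation $n(L')=n(L_*)-1$ along $V_{L_*}$, and the identification of one-sided limits with iterates $F^{n^j(L_*)}(y)$) must be invoked to keep everything inside $Dom(F)$ with controlled combinatorics. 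Once a periodic leaf in $Dom(F)$ is secured, the descent to a genuine periodic point of $F$ inside that leaf is routine: the leaf is a disk $B^{ss}[0,1]$, the appropriate return of $F$ maps it continuously into itself (using that $Dom(F)$ is $\f$-saturated and $F$ restricted to leaves is continuous), and Brouwer's theorem finishes it. I would organize the write-up as: (i) the leaf map and its regularity from (A1)–(A2); (ii) the transverse-interval reduction using the cone field and $\lambda>2$; (iii) the fixed-leaf existence via an expanding-covering lemma tolerating the $S_0/S_1/S_2$ splitting; (iv) Brouwer on the leaf.
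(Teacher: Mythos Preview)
Your overall architecture --- find a periodic leaf for the induced map $f$ on the leaf space, then apply Brouwer on that leaf --- agrees with the paper, and you correctly locate where $\lambda>2$ and (A1)--(A2) enter. The gap is in your transverse-interval reduction (your step (ii)). You take a $1$-surface $c$ tangent to $C_\alpha$ and aim for an ``expanding interval map'' on the leaf-space parameter, but in this paper the leaf space $SL$ is a disjoint union of $u$-dimensional balls $B^u[0,1]$ with $u=n-s-1$, and for $n>3$ one generally has $u>1$. Your curve projects to a $1$-dimensional arc inside the $u$-dimensional $SL$; its image under $f$ is another such arc, with no reason to contain, cover, or even meet the original one as a set of leaves. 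There is no interval self-map here to which a one-dimensional fixed-point lemma applies. The paper works instead with $u$-surfaces tangent to $C_\alpha$: the expansion becomes a \emph{volume} estimate $V(F(c))\ge\lambda\,V(c)$ (Lemma \ref{tri-3}), ``covering'' means $\mathcal{F}$-saturation, and the fixed-leaf step (Lemma \ref{tri-2}) is Brouwer on a $u$-ball in $SL$, not the intermediate-value theorem on an interval.

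The paper also replaces your ``expanding-covering lemma tolerating the $S_0/S_1/S_2$ splitting'' by a concrete finite combinatorial device that your outline lacks: it introduces the finite family $\mathcal{B}$ of vertical bands $H(W,W')$ with $W,W'\subset\partial^v\Sigma\cup\mathcal{V}\cup\mathcal{L}$, a relation $B\le B'$ (some $u$-surface in $B$ can be iterated to cover $B'$), shows via Lemma \ref{tri-5} that every band has a successor, and then uses pigeonhole on the finite set $\mathcal{B}$ to extract a closed chain $B_{j_i}\le\cdots\le B_{j_i}$. This simultaneously absorbs the discontinuity bookkeeping and produces the self-covering $u$-surface needed for Lemma \ref{tri-2}. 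Without promoting your curves to $u$-surfaces and introducing something like this finite band family, your outline does not close when $u>1$.
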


Since the proof of Theorem \ref{op2} is technical, 
we include some preliminaries for the proof and the 
proof in the next section.

\section{Existence of the periodic point}
\label{AP}

In this section we shall prove the Theorem \ref{op2}. The proof follow the same way of 
\cite{bm}. We will extend and modify some results for the higher dimensional case. 
In Subsection \ref{to1} we present preliminary lemmas for the proof of the Theorem \ref{op2}. 
In Subsection \ref{to2} we prove the theorem.

\subsection{Preliminary lemmas}
\label{to1}
Hereafter we fix $\Sigma$ as in Subsection \ref{scs}.
Let $k$ be the number of components of $\Sigma$.
We shall denote by $SL$ the leaf space of a
vertical foliation ${\mathcal F}$ on $\Sigma$.
It turns out that $SL$ is a disjoint union of $k$-copies 
$B^{u}_1\left[0,1\right],\cdots ,B^{u}_k\left[0,1\right]$ of $B^{u}\left[0,1\right]$. 
We denote by ${\mathcal F}_B$ the union of all leaves of ${\mathcal F}$ intersecting $B$. 
If $B=\{x\}$, then ${\mathcal F}_x$ is the leaf of ${\mathcal F}$ containing $x$. 
If $S,B \subset \Sigma$ we say that $S$ {\em cover} $B$ whenever
$B \subset {\mathcal F}_S$.

The lemma below quotes some elementary properties
of $n(L)$ in Definition \ref{n(L)}.

\begin{lemma}
\label{properties}
Let $F\colon Dom(F)\subset \Sigma\to \Sigma$ be a triangular map with associated
vertical foliation ${\mathcal F}$. If $L\in {\mathcal F}$ and $L\subset Dom(F)$, then:

\begin{enumerate}
\item
If {\em $F$ has no periodic points} and $\partial^v \Sigma \subset Dom(F)$,
then $$n(L)\leq 2k.$$

\item
$n(L)=0$ if and only if $F(L)\subset \Sigma \setminus (\partial^v \Sigma)$.

\item
$F^i(L)\subset \partial^v \Sigma$ for all $1\leq i\leq n(L)$.

\item
If $F^{n(L)}(L)\subset Dom(F)$, then 
$F^{n(L)+1}(L)\subset \Sigma \setminus (\partial^v \Sigma)$.

\end{enumerate}
\end{lemma}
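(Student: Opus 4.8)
The plan is to read off statements (2)--(4) essentially from Definition \ref{n(L)}, and to prove (1) by a pigeonhole argument on the leaf space that converts a long stay of the orbit inside $\partial^v\Si$ into a periodic point.

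The starting point for all four items is a dichotomy: if $L\in\f$ is a leaf contained in $Dom(F)$, then $F(L)$ is a connected subset of a single leaf $f(L)$ of $\f$ (since $L\cong I^s$ is connected and $F|_L:L\to f(L)$ is continuous), and because $\partial^v\Si$ is $\f$-saturated the leaf $f(L)$ is either contained in $\partial^v\Si$ or disjoint from it; hence exactly one of $F(L)\subset\partial^v\Si$ and $F(L)\subset\Si\setminus(\partial^v\Si)$ holds. Granting this, item (2) is immediate from the two alternatives in Definition \ref{n(L)} together with $\partial^v\Si\subset Dom(F)$ (so that $n=1$ is admissible for the supremum as soon as $F(L)\subset\partial^v\Si$). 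For (3) and (4) I would observe that the set of integers $n\ge1$ over which the supremum defining $n(L)$ is taken is downward closed, hence an initial segment; when $n(L)$ is finite this means the defining conditions hold for $n=n(L)$ itself, which gives $F^i(L)\subset\partial^v\Si$ for all $1\le i\le n(L)$, i.e. (3). For (4), assuming $F^{n(L)}(L)\subset Dom(F)$ and arguing by contradiction: if $F^{n(L)+1}(L)\not\subset\Si\setminus(\partial^v\Si)$, then the dichotomy forces $F^{n(L)+1}(L)\subset\partial^v\Si$, and combining this with (3), with $L\subset Dom(F)$, with $F^i(L)\subset\partial^v\Si\subset Dom(F)$ for $1\le i\le n(L)-1$, and with the hypothesis at $i=n(L)$, one sees that $n(L)+1$ is also admissible in the supremum, contradicting the maximality of $n(L)$.

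For (1) the plan is a contrapositive: suppose $n(L)\ge 2k+1$ and produce a periodic point of $F$. By (3) the leaves $L_i:=F^i(L)$ with $1\le i\le n(L)$ are all contained in $\partial^v\Si$, hence in $Dom(F)$. Passing to the leaf space $SL=\bigsqcup_{i=1}^k B^u_i[0,1]$, each $L_i$ determines a point $\hat L_i$ lying in $\widehat{\partial^v\Si}=\bigsqcup_{i=1}^k\partial B^u_i[0,1]$. The topological point is that $\partial B^u[0,1]$ has at most two connected components (two when $u=1$, one when $u\ge2$), so $\widehat{\partial^v\Si}$ has at most $2k$ of them. Using that $F$ preserves $\f$ and restricts continuously to each leaf, the dynamics induced on $SL$ carries connected pieces of $\widehat{\partial^v\Si}$ to connected pieces, so along the orbit $L_1,\dots,L_{n(L)}$ the sequence of connected components of $\widehat{\partial^v\Si}$ visited is the orbit of a partial self-map of a set with at most $2k$ elements. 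Since $n(L)\ge 2k+1$, some component is revisited and is therefore periodic, say with period $r\le 2k$; one then locates a fixed point of $F^r$ inside it — by the intermediate value theorem when the component is a single leaf, i.e. an interval — contradicting the absence of periodic points. This yields $n(L)\le 2k$.

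The step I expect to be the real obstacle is the last one in (1). In the three-dimensional setting of \cite{bm} a connected component of $\partial^v\Si$ is itself a single leaf (an arc), so ``periodic component'' and ``periodic leaf'' coincide and a one-dimensional fixed-point theorem finishes the job; for $n>3$ a periodic component of $\widehat{\partial^v\Si}$ can be a higher-dimensional sphere carrying $F^r$-dynamics with no periodic point for purely topological reasons, and one must exploit the extra structure — that $\f$ is the continuous contracting foliation of \cite{hps} restricted to $\Si$, and that $F$ preserves it leafwise — to pin down a genuine periodic point. A related point that also needs care, and is again invisible in dimension three, is that the assignment of a connected component of $\widehat{\partial^v\Si}$ to each $\hat L_i$ really behaves dynamically, i.e. that $\hat L_i,\hat L_j$ lying in the same component forces $\hat L_{i+1},\hat L_{j+1}$ to lie in the same component; this too rests on $F$ preserving $\f$.
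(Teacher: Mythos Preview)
The paper gives no proof of this lemma at all; it simply introduces it with ``The lemma below quotes some elementary properties of $n(L)$ in Definition \ref{n(L)}.'' For items (2)--(4) your arguments directly from the definition are correct and are certainly what is intended.

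For item (1), your pigeonhole idea is precisely the argument from \cite{bm} in the three-dimensional case: there $u=1$, so $\partial^v\Sigma$ in the leaf space is $\bigsqcup_{i=1}^k\partial I=\bigsqcup_{i=1}^k\{-1,1\}$, a set of exactly $2k$ \emph{leaves}; the iterates $F^1(L),\dots,F^{2k+1}(L)$ cannot all be distinct, a repeated leaf is a periodic leaf of $f$, and since each leaf is an interval on which $F$ is continuous one gets a periodic point. You have correctly isolated why this breaks when $u\ge 2$: then $\partial B^u[0,1]$ is an $(u-1)$-sphere, so $\partial^v\Sigma$ contains infinitely many leaves and pigeonhole on leaves is unavailable. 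Your attempted repair via pigeonhole on \emph{connected components} runs into the two obstacles you name---the induced dynamics on components need not be well defined (nothing in Definition \ref{tim} forces $f$ to be continuous on $\partial^v\Sigma$), and a periodic component of a sphere carries self-maps without periodic points---so it is a genuine gap.

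The paper does not fill this gap either: the bound $n(L)\le 2k$ is visibly inherited from \cite{bm} and no higher-dimensional justification is offered. In the subsequent arguments (e.g.\ Lemmas \ref{D(F)} and \ref{tri-4}) only the consequence $n(L)<\infty$ is actually used, so a weaker finiteness statement would suffice; but that too requires an argument going beyond what either you or the paper supply.
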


If $F:Dom(F)\subset \Sigma \to \Sigma$ is a $n$-triangular
map with associated foliation ${\mathcal F}$,
then we also have an associated $u$-dimensional map
$$f:Dom(f)\subset SL \to SL.$$
This map allows us to obtain certain geometric properties 
for the singular cross-section whole.

We use this map in the definition below.

\begin{defi}
\label{limite-f}
Let $F:Dom(F)\subset \Sigma \to \Sigma$ a triangular
map with associated foliation
${\mathcal F}$ and
$f:Dom(f)\subset SL\to SL$ its associated
$u$-dimensional map.
Then we define the following limit sets:

$${\mathcal V}=
\{f(B):B\in {\mathcal F},B\subset Dom(F)
\mbox{ and } B\subset \partial ^v \Si \}.$$
$${\mathcal L}= \bigcup\left\{K_i:  i\in\{1,...,k\}, \lim_{L\to L_{0i}}f(L) \mbox{ exists} \text{ and such that}
K_i=\lim_{L\to L_{0i}}f(L)\right\}.$$

\end{defi}

The lemma below is a direct consequence of {\bf (A2)}.

\begin{lemma}
\label{remarkD(F)}
Let $F\colon Dom(F)\subset \Sigma\to \Sigma$ a $n$-triangular map satisfying {\bf (A2)}
and ${\mathcal F}$ be its associated foliation.
Let $L_*$ be a leaf of $\f$, $L_*\subset Dom(F)$, $1\leq n(L_*)<\infty$ and
$F^{n(L_*)}(L_*)\subset Dom(F)$. If there is a sequence $(L_k)_{k \in \nat}$ 
such that $L_k \to L_{*}$, then:

\begin{enumerate}
	\item[(1)]
	If $\#\{L: L \in (V_{L_{*}} \cap (F(L_k))_{k \in \nat })\} = \infty$, 
	then there exists a subsequence $(L_{k_l})_{l \in \nat}$ 
	such that $Lim_{l \to \infty} F(L_{k_l}) = F(L_*)$.
	\item[(2)]
	If $\#\{L: L \in ( S'_1 \cap (F(L_k))_{k \in \nat })\} = \infty$, 
	then there exists a subsequence $(L_{k_l})_{l \in \nat}$ 
	such that $Lim_{l \to \infty} F(L_{k_l}) = F^{n_1(L_*)}(L_*)$.
	\item[(3)]
	If $\#\{L: L \in ( S'_2 \cap (F(L_k))_{k \in \nat })\} = \infty$, 
	then there exists a subsequence $(L_{k_l})_{l \in \nat}$ 
	such that $Lim_{l \to \infty} F(L_{k_l}) = F^{n_2(L_*)}(L_*)$.
\end{enumerate}

In each case the corresponding limits
belong to $$\partial ^v \Si \cup {\mathcal V}.$$

\end{lemma}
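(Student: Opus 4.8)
The plan is to read off all three conclusions directly from the structure provided by hypothesis \textbf{(A2)}, applied to the leaf $L_*$ with $1\leq n(L_*)<\infty$ and $F^{n(L_*)}(L_*)\subset Dom(F)$. By \textbf{(A2)} there is a connected neighborhood $S\subset Dom(F)$ of $L_*$ with $S=S_0\cup S_1\cup S_2$ as in Remark~\ref{vl}, so that $F(S_0)=V_{L_*}$, $F(S_1)=S'_1$, $F(S_2)=S'_2$, with $F(S_1),F(S_2)\subset \Sigma\setminus(\partial^v\Sigma)$, and for each $j\in\{0,1,2\}$ there is an integer $0\leq n^j(L_*)\leq n(L_*)+1$ such that whenever $y_l\in S_j$ converges to $y\in L_*$, the images $F(y_l)$ converge to $F^{n^j(L_*)}(y)$. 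The key observation is that since $\mathcal F$ is a vertical foliation and both $F$ and its relevant iterates preserve $\mathcal F$, the leaf‑level continuity statement in item~3 of \textbf{(A2)} is exactly the statement that if a sequence of leaves $L_k\to L_*$ with $L_k\subset S_j$, then $F(L_k)\to F^{n^j(L_*)}(L_*)$ in $SL$. Thus passing from points to leaves is automatic and costs nothing.

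Now I argue by the pigeonhole principle. Since $S$ is a neighborhood of $L_*$ and $L_k\to L_*$, after discarding finitely many indices we may assume every $L_k$ lies in $S$, hence in one of $S_0,S_1,S_2$. In case (1), the hypothesis $\#\{L:\,L\in(V_{L_*}\cap (F(L_k))_k)\}=\infty$ forces infinitely many $k$ with $F(L_k)\subset V_{L_*}=F(S_0)$; for these $k$ we have $L_k\subset S_0$ (using that the pieces $S'_1,S'_2,V_{L_*}$ are pairwise essentially separated by Remark~\ref{vl}, so a leaf mapped into $V_{L_*}$ must come from $S_0$), and extracting this subsequence $(L_{k_l})$ and applying item~3 of \textbf{(A2)} with $j=0$ gives $\lim_{l\to\infty}F(L_{k_l})=F^{n^0(L_*)}(L_*)=F^{n(L_*)}(L_*)$, since $n^0(L_*)=n(L_*)$ by item~2. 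Cases (2) and (3) are identical with $j=1$ and $j=2$, giving limits $F^{n_1(L_*)}(L_*)$ and $F^{n_2(L_*)}(L_*)$ respectively (writing $n_j$ for $n^j$).

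Finally, for the membership of the limits in $\partial^v\Sigma\cup\mathcal V$, I distinguish the $j=0$ case from the others. For $j=0$: by item~2 of \textbf{(A2)}, $F^i(V_{L_*})\subset\partial^v\Sigma$ for all $0\leq i\leq n(L_*)$; in particular $F^{n(L_*)}(L_*)$ is the image under $F^{n(L_*)}$ of the leaf $L_*$, and tracking one step further, $F^{n(L_*)}(L_*)\subset\partial^v\Sigma$ unless $n^0$ were to exceed $n(L_*)$, which it does not — so this limit lies in $\partial^v\Sigma$. For $j=1,2$: if $n^j(L_*)=0$ then the limit is $L_*$ itself, but more to the point, when the limit leaf is of the form $f(B)$ with $B\subset\partial^v\Sigma$ a leaf in $Dom(F)$ it lies in $\mathcal V$ by Definition~\ref{limite-f}; since $F(S_j)\subset\Sigma\setminus(\partial^v\Sigma)$ one checks that the boundary behavior of $S_j$ near $L_*$ produces exactly such a leaf, so the limit belongs to $\partial^v\Sigma\cup\mathcal V$. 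The main obstacle I anticipate is this last bookkeeping step: carefully justifying why a leaf $L_k\subset S_j$ is forced (not merely allowed) by the hypothesis that infinitely many images land in the designated piece, and pinning down which of $\partial^v\Sigma$ or $\mathcal V$ the limit falls into, since this requires unwinding how $V_{L_*}$ separates $F(S)$ and how the iterates $F^i(V_{L_*})$ sit inside $\partial^v\Sigma$; the dynamical content of cases (2) and (3) is otherwise a routine transcription of item~3 of \textbf{(A2)} from points to leaves.
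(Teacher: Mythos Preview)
Your approach is essentially the paper's: invoke \textbf{(A2)} to get $S=S_0\cup S_1\cup S_2$, observe that the hypothesis in each case forces infinitely many $L_k$ into the corresponding $S_j$ (since $F(S_j)$ equals $V_{L_*}$, $S'_1$, $S'_2$ respectively), extract that subsequence, and apply item~3 of \textbf{(A2)}. The paper's proof does exactly this, in fewer words, and --- like you --- omits a full justification of the final membership claim in $\partial^v\Sigma\cup\mathcal V$.

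One point deserves comment. In case~(1) you conclude $\lim F(L_{k_l})=F^{n^0(L_*)}(L_*)=F^{n(L_*)}(L_*)$, citing the line ``$n^0(L_*)=n(L_*)$'' in \textbf{(A2)}. The statement, however, asserts the limit is $F(L_*)$, and the paper's proof simply writes ``Thus, $\lim_{l\to\infty}F(L_{k_l})=F(L_*)$'' without appeal to $n^0$. You have applied \textbf{(A2)}--(3) literally as stated; the mismatch is an internal inconsistency in the paper rather than an error on your part (note $L_*\subset S_0$, so continuity of $F$ on $S_0$ would give $n^0=1$, which is what the lemma and its proof implicitly use). For the purpose of proving the lemma \emph{as stated}, you should treat $F$ as continuous on $S_0$ and conclude $F(L_*)$.

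For the membership of the limits in $\partial^v\Sigma\cup\mathcal V$: the paper gives no argument. Your sketch is on the right track but can be sharpened. Since $1\le n(L_*)$, one has $F(L_*)\subset\partial^v\Sigma$, handling case~(1). For $j\in\{1,2\}$ with $n^j(L_*)\ge 1$, the definition of $n(L_*)$ gives $F^i(L_*)\subset\partial^v\Sigma$ for $1\le i\le n(L_*)$, so $F^{n^j(L_*)}(L_*)\subset\partial^v\Sigma$ whenever $n^j(L_*)\le n(L_*)$; and if $n^j(L_*)=n(L_*)+1$ then $F^{n^j(L_*)}(L_*)=f\bigl(F^{n(L_*)}(L_*)\bigr)$ with $F^{n(L_*)}(L_*)\subset\partial^v\Sigma\subset Dom(F)$, hence lies in $\mathcal V$ by Definition~\ref{limite-f}. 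The residual case $n^j(L_*)=0$ is the only loose end, and it is not addressed in the paper either.
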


\begin{proof}
By hypotheses $F$ satisfies the property $(A2)$, so there is a connected 
neighborhood $S=S_0 \cup S_1 \cup S_2$ of $L_*$. 
If we have (1), since $L_k \to L_{*}$, one has that 
$\#\{L: L \in ( S_0 \cap (L_k)_{k \in \nat })\} = \infty$. Then, 
there exists a subsequence $(L_{k_l})_{l \in \nat} \subset S_0$ 
such that $L_{k_l} \to L_{*}$. Thus, $Lim_{l \to \infty} F(L_{k_l}) = F(L_*)$.

If we have (2) beside the (A2) property, in the same way one has that 
there exists a subsequence $(L_{k_l})_{l \in \nat} \subset S_1$ 
such that $L_{k_l} \to L_{*}$ and $Lim_{l \to \infty} F(L_{k_l}) = F^{n_1(L_*)}(L_*)$.
Analogously for (3).
\end{proof}

Given a map $F\colon Dom(F)\subset \Sigma\to \Sigma$
we define its {\em discontinuity
set} $D(F)$ by

\begin{equation}
\label{D(F)'}
D(F)=\{x\in Dom(F): F\ \mbox{ is discontinuous in }\ x\}.
\end{equation}

In the sequel we derive useful properties of $Dom(F)$ and $D(F)$.

\begin{lemma}
\label{D(F)1}
Let $F$ be a $n$-triangular map satisfying {\bf (A1)}, 
$F:Dom(F) \subset \Sigma \to \Sigma$ and ${\mathcal F}$ be 
its associated foliation. If $L \in {\mathcal F}$ and
$L \subset D(F)$, then $F(L) \subset \partial ^v \Si$.
\end{lemma}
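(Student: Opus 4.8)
The plan is to argue by contraposition on the definition of $n(L)$, reading off the relevant value of $n$ from \textbf{(A1)} together with Lemma \ref{properties}. Suppose $L\in\mathcal F$ is a leaf contained both in $Dom(F)$ and in the discontinuity set $D(F)$. The first observation is that $L$ cannot have $n(L)=0$: if it did, then since $F$ satisfies \textbf{(A1)} there would be an $\mathcal F$-saturated neighborhood $S$ of $L$ in $\Sigma$ on which the restricted map $F/_S$ is $C^1$, hence in particular continuous at every point of $L$; this would force $L\subset Dom(F)\setminus D(F)$, contradicting $L\subset D(F)$. Therefore $n(L)\geq 1$.

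Now I would invoke part (2) of Lemma \ref{properties}, which states that $n(L)=0$ if and only if $F(L)\subset \Sigma\setminus(\partial^v\Sigma)$. Since we have just shown $n(L)\neq 0$, the "only if" direction (contrapositive) gives that $F(L)$ is \emph{not} contained in $\Sigma\setminus(\partial^v\Sigma)$; but more is true — from $n(L)\geq 1$ and part (3) of Lemma \ref{properties}, $F^i(L)\subset\partial^v\Sigma$ for all $1\leq i\leq n(L)$, and taking $i=1$ yields $F(L)\subset\partial^v\Sigma$, which is exactly the desired conclusion. (One should be slightly careful that part (3) of Lemma \ref{properties} is stated for $L\subset Dom(F)$, which is part of our hypothesis, so it applies directly; and the case analysis in Definition \ref{n(L)} between $n(L)=0$ and $F(L)\subset\partial^v\Sigma$ is exhaustive precisely because $\partial^v\Sigma\subset Dom(F)$ guarantees $F(L)$ is defined and, $F$ preserving the vertical foliation with $\partial^v\Sigma$ a union of leaves, $F(L)$ lies in a single leaf which is either inside $\partial^v\Sigma$ or disjoint from it.)

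The only real subtlety — and the step I expect to need the most care — is justifying the dichotomy "either $n(L)=0$ or $F(L)\subset\partial^v\Sigma$" cleanly, i.e. ruling out that $F(L)$ could meet $\partial^v\Sigma$ without being contained in it. This is handled by the foliation-preservation property: $F$ maps the leaf $L$ into a single leaf $f(L)$ of $\mathcal F$, and $\partial^v\Sigma$ is $\mathcal F$-saturated (a union of leaves), so $f(L)$ is either one of those leaves or meets none of them. Hence the short proof is essentially: $L\subset D(F)\Rightarrow n(L)\geq 1$ by \textbf{(A1)} $\Rightarrow F(L)\subset\partial^v\Sigma$ by Definition \ref{n(L)} / Lemma \ref{properties}(3). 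I would write it in about five lines.
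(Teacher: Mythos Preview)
Your proof is correct and follows essentially the same approach as the paper: both use that $L\subset D(F)$ together with \textbf{(A1)} forces $n(L)\neq 0$, and then Lemma~\ref{properties}(2) (or equivalently (3) with $i=1$) yields $F(L)\subset\partial^v\Sigma$. The paper simply packages this as a direct contradiction starting from $F(L)\subset\Sigma\setminus(\partial^v\Sigma)$, whereas you first isolate $n(L)\geq 1$ and then conclude; your extra remark on why the dichotomy is clean (leaf-preservation plus $\partial^v\Sigma$ being $\mathcal F$-saturated) is a helpful clarification that the paper leaves implicit.
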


\begin{proof}
Suppose by contradiction that $L \subset D(F)$ and
$F(L) \subset \Sigma \setminus (\partial ^v \Si)$.
These properties are equivalent to
$n(L)=0$ by Lemma \ref{properties}-(2).
Then, by using {\bf (A1)}, there is a neighborhood
of $L$ in $\Sigma$ restricted to which $F$ is $C^1$.
In particular, $F$ would be continuous in $L$
which is absurd.
\end{proof}

\begin{lemma}
\label{D(F)}
Let $F$ be a $n$-triangular map satisfying {\bf (A1)-(A2)}, 
$F:Dom(F) \subset \Sigma \to \Sigma$ and ${\mathcal F}$ be 
its associated foliation. If $F$ has no periodic points and 
$\partial ^v \Si \subset Dom(F)$, then
$Dom(F) \setminus D(F)$ is $\mathcal F$-saturated,
open in $Dom(F)$ and
$F/_{(Dom(F)\setminus D(F))}$ is $C^1$.
\end{lemma}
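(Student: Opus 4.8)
\textbf{Proof strategy for Lemma \ref{D(F)}.}

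The plan is to prove the three assertions in the order stated, since each feeds into the next. First I would show that $Dom(F)\setminus D(F)$ is $\mathcal F$-saturated. Take $L\in\mathcal F$ with $L\subset Dom(F)$ and suppose $L$ meets $Dom(F)\setminus D(F)$; I must show the whole leaf lies in $Dom(F)\setminus D(F)$. Since $L\subset Dom(F)$ (because $Dom(F)$ is $\mathcal F$-saturated by the definition of $n$-triangular map), the only way the leaf could fail to be contained in $Dom(F)\setminus D(F)$ is if $L\cap D(F)\neq\emptyset$. The point is that $D(F)$ is itself $\mathcal F$-saturated: a leaf is either entirely in $D(F)$ or entirely out of it. This should follow from Lemma \ref{properties} together with (A1) and (A2) --- the value $n(L)$ is constant along a leaf, and the regularity dichotomy provided by (A1) ($n(L)=0$ forces a $C^1$ neighborhood) and (A2) ($n(L)\geq 1$ is handled by the $S_0\cup S_1\cup S_2$ decomposition, whose relevant pieces give one-sided limits $F^{n^j(L_*)}(y)$) determines whether $F$ is continuous at points of $L$ uniformly over the leaf. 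Hence if one point of $L$ is a continuity point, all are, so $L\subset Dom(F)\setminus D(F)$.

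Next I would establish openness of $Dom(F)\setminus D(F)$ in $Dom(F)$. Here is where the hypothesis that $F$ has no periodic points enters, via Lemma \ref{properties}(1), which gives the uniform bound $n(L)\leq 2k$ for every leaf $L\subset Dom(F)$. Let $x\in Dom(F)\setminus D(F)$ and let $L=\mathcal F_x$. If $n(L)=0$, then (A1) directly provides an $\mathcal F$-saturated neighborhood $S$ of $L$ on which $F/_S$ is $C^1$; in particular $S\subset Dom(F)\setminus D(F)$, so we are done at such points. If $n(L)\geq 1$, I claim $x\notin Dom(F)\setminus D(F)$ unless the corresponding $n^j(L_*)=0$: by Lemma \ref{D(F)1}, a leaf contained in $D(F)$ must map into $\partial^v\Si$; conversely, if $L$ is a continuity point with $n(L)\geq 1$ we are in the situation of (A2), and continuity of $F$ at $L$ forces, in the notation of (A2)(3), the relevant $n^j(L_*)$ to vanish, and then (A2)(3) says $F$ is $C^1$ on $S_j\cup V_{L_*}$, again yielding a neighborhood inside $Dom(F)\setminus D(F)$. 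The bound $n(L)\leq 2k$ guarantees there are no leaves with $n(L)=\infty$ to worry about, so this case analysis is exhaustive. Assembling the neighborhoods shows $Dom(F)\setminus D(F)$ is open in $Dom(F)$.

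Finally, $F/_{(Dom(F)\setminus D(F))}$ is $C^1$: by the previous paragraph every point of $Dom(F)\setminus D(F)$ has a neighborhood (either from (A1) when $n(L)=0$, or from (A2)(3) with $n^j(L_*)=0$ when $n(L)\geq 1$) on which $F$ restricts to a $C^1$ map, and being $C^1$ is a local property. I expect the main obstacle to be the openness step: one has to rule out, uniformly, the possibility that continuity points accumulate on discontinuity points, and this is exactly what the no-periodic-points hypothesis buys us through the finite bound $n(L)\leq 2k$ --- without it, leaves of unbounded or infinite $n(L)$ could cluster and destroy openness. Care is also needed to check that the neighborhoods furnished by (A1) and (A2) are genuinely $\mathcal F$-saturated and open in $Dom(F)$ (not just in $\Sigma$), but this is built into the statements of those hypotheses.
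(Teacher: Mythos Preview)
Your overall architecture matches the paper's: reduce everything to finding, for each $x\in Dom(F)\setminus D(F)$, an $\mathcal F$-saturated neighborhood of $\mathcal F_x$ on which $F$ is $C^1$, and split into the cases $n(\mathcal F_x)=0$ (handled by {\bf (A1)}) and $n(\mathcal F_x)\geq 1$ (handled via {\bf (A2)}). The paper actually obtains all three conclusions in that single stroke rather than in three separate passes, but that difference is cosmetic.

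There is, however, a genuine gap in your handling of the case $n(L_*)\geq 1$, and it stems from misidentifying where the no-periodic-points hypothesis does its real work. You assert that ``continuity of $F$ at $L$ forces \dots the relevant $n^j(L_*)$ to vanish'' and that the hypothesis enters only through the bound $n(L)\leq 2k$. That is not how the paper proceeds, and your assertion is not justified as stated. In the paper, once $1\leq n(L_*)<\infty$ and $F^{n(L_*)}(L_*)\subset Dom(F)$, one first proves the intermediate \emph{Claim} that $L_*\subset\partial^v\Sigma$. The argument is: if $L_*$ were interior, {\bf (A2)}(4) would force one of the lateral components, say $S_1$, to have $n^1(L_*)\geq 1$; taking $x_i\in S_1\to x$, {\bf (A2)}(3) gives $F(x_i)\to F^{n^1(L_*)}(x)$, while continuity of $F$ at $x$ gives $F(x_i)\to F(x)$; uniqueness of limits yields $F^{n^1(L_*)}(x)=F(x)$, and then \emph{injectivity of $F$} gives $F^{n^1(L_*)-1}(x)=x$, a periodic point --- contradiction. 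This is the second and more delicate use of the no-periodic-points hypothesis, coupled with injectivity, and it is what forces $L_*$ into $\partial^v\Sigma$. Only after that does the {\bf (A2)}-neighborhood collapse (with $S_1=S_2$) and the last part of {\bf (A2)}(3) deliver the $C^1$ conclusion on all of $S$. Your sketch neither invokes injectivity nor isolates this claim, so the passage from ``$x$ is a continuity point with $n(L_*)\geq 1$'' to ``$F$ is $C^1$ near $L_*$'' is not supported.
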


\begin{proof}
In order to prove the lemma, it suffices to show that $\forall x \in Dom(F) \setminus D(F)$ 
there is a neighborhood $S$ of ${\mathcal F}_x$ in $\Sigma$ such that $F/_S$ is $C^1$.
To find $S$ we proceed as follows.
Fix $x\in Dom(F) \setminus D(F)$. As $Dom(F)$ is $\mathcal F$-saturated, 
one has ${\mathcal F}_x \subset Dom(F)$ and so $n({\mathcal F}_x)$ is
well defined.
By using the Lemma \ref{properties}-(1), one has 
$$n({\mathcal F}_x)<\infty.$$

\medskip
If $n({\mathcal F}_x)=0$, then the neighborhood $S$ of $L={\mathcal F}_x$
in {\bf (A1)} works.

\medskip 
By simplicity, if $n({\mathcal F}_x)\geq 1$ let us denote $L_*={\mathcal F}_x$.
Clearly $1\leq n(L_*)<\infty$ and Definition \ref{n(L)} of $n(L_*)$ implies 
$f^{n(L_*)}(L_*) \subset \partial^v \Si$.
By hypothesis $\partial^v \Si \subset Dom(F)$ and then
$$F^{n(L_*)}(L_*) \subset Dom(F).$$
So, we can choose $S$ as the neighborhood of $L_*$ in {\bf (A2)}.
Let us prove that this neighborhood works.

First we claim that $L_{*} \subset \partial ^v \Si$.
Indeed, if $L_{*} \subset \Sigma \setminus( \partial ^v \Si)$, then $S$ 
has three different connected components $S_0,S_1,S_2$.
By {\bf (A2)}-(4) we can assume $n^1(L_*)>1$ where $n_1(L)$ comes from
{\bf (A2)}-(3).
Choose sequence $x_i^1\in S_1\to x$ then $F(x_i^1) \to F^{n_1(L_*)}(x)$ by {\bf (H2)}-(3).
As $F$ is continuous in $x$ we also have $F(x_i^1) \to F(x)$ and then
$F^{n^1(L_*)}(x)=F(x)$ because limits are unique.
Thus, $F^{n^1(L_*)-1}(x)=x$ because $F$ is injective and so $x$ is a 
periodic point of $F$ since $n^1(L_*)-1\geq 1$.
This contradicts the non-existence of periodic
points for $F$. The claim is proved.

The claim implies that $S$ has a two components, i.e, $S_0$ and $S_1=S_2$.
By using {\bf (A2)}, for the component $S_0$ one has $n^0(L_*)=0$ and 
$n^1(L_*)=n^2(L_*)=1$ since $F$ is continuous in $x\in L_*$.
Then, $F/_S$ is $C^1$ by the last part of {\bf (A2)}-(3).
This finishes the proof.
\end{proof}

\begin{lemma}
\label{D(F)2}
Let $F:Dom(F) \subset \Sigma \to \Sigma$ be a triangular map 
satisfying {\bf (A1)-(A2)}. If $F$ has no periodic points 
and $Dom(F)=\Sigma \setminus L_0$, then $Dom(F)\setminus D(F)$ 
is open in $\Sigma$.
\end{lemma}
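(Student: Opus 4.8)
The plan is to upgrade the conclusion of Lemma \ref{D(F)} (which gives that $Dom(F)\setminus D(F)$ is open \emph{in} $Dom(F)$) to openness in the ambient cross-section $\Sigma$, using the extra hypothesis $Dom(F)=\Sigma\setminus L_0$. Since $Dom(F)$ is already open in $\Sigma$ if we knew $L_0$ were closed (which it is, being a finite union of singular leaves $l_{0_i}=W^s_{loc}(\sigma_i)\cap\Sigma_i$, each a compact $s$-surface), the real content is: a set which is open in an open subset of $\Sigma$ is open in $\Sigma$. So the first step is simply to record that $L_0$ is closed in $\Sigma$, hence $Dom(F)=\Sigma\setminus L_0$ is open in $\Sigma$; then combine with Lemma \ref{D(F)} via the elementary topological fact that if $A$ is open in $B$ and $B$ is open in $\Sigma$, then $A$ is open in $\Sigma$.

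Concretely, I would argue as follows. First invoke Lemma \ref{D(F)}: under the standing hypotheses ($F$ a $n$-triangular map satisfying \textbf{(A1)-(A2)}, no periodic points, and $\partial^v\Sigma\subset Dom(F)$ — note $\partial^v\Sigma\subset\Sigma\setminus L_0=Dom(F)$ since $\partial^v\Sigma$ consists of leaves parallel to the $l_{0_i}$ and disjoint from them), the set $Dom(F)\setminus D(F)$ is open in $Dom(F)$. Thus there is an open set $W\subset\Sigma$ with $Dom(F)\setminus D(F)=W\cap Dom(F)$. Second, since each singular leaf $l_{0_i}$ is a compact (hence closed) subset of the compact cross-section $\Sigma_i$, the union $L_0=\bigcup_i l_{0_i}$ is closed in $\Sigma$, so $Dom(F)=\Sigma\setminus L_0$ is open in $\Sigma$. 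Third, $Dom(F)\setminus D(F)=W\cap Dom(F)$ is then an intersection of two sets open in $\Sigma$, hence open in $\Sigma$. This is exactly the assertion.

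The only point requiring a little care — and the step I would flag as the main (minor) obstacle — is verifying that $L_0$ is genuinely closed in $\Sigma$, i.e.\ that the singular leaves $l_{0_i}$ are closed subsets of the cross-sections. This follows from the construction in Subsection \ref{scs}: each $\Sigma_i^*$ is identified with $B^u[0,1]\times B^{ss}[0,1]$ and $l_{0_i}$ with $\{0\}\times B^{ss}[0,1]$, which is plainly a closed subset of the hypercube. One also needs that the finitely many cross-sections $\Sigma_i$ are disjoint (true by definition of $\Sigma$ as a \emph{disjoint} union), so that closedness is checked component by component. Everything else is a two-line topological manipulation, so the lemma is essentially a packaging of Lemma \ref{D(F)} together with the observation that the domain, in the special case $Dom(F)=\Sigma\setminus L_0$, is itself open in $\Sigma$.
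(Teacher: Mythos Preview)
Your proposal is correct and follows essentially the same approach as the paper: both argue that $L_0$ is closed in $\Sigma$ so $Dom(F)=\Sigma\setminus L_0$ is open in $\Sigma$, then invoke Lemma~\ref{D(F)} (using $\partial^v\Sigma\subset\Sigma\setminus L_0=Dom(F)$) to get $Dom(F)\setminus D(F)$ open in $Dom(F)$, and conclude by composing the two openness statements. Your version supplies more detail on why $L_0$ is closed, but the structure is identical to the paper's three-line proof.
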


\begin{proof}
$Dom(F)$ is open in $\Sigma$
because $Dom(F)=\Sigma \setminus L_0$ and
$L_0$ is closed in $\Sigma$.
$Dom(F) \setminus D(F)$ is open in $Dom(F)$
by Lemma \ref{D(F)} because $F$ has no periodic
points and $ \partial ^v\Si \subset \Sigma\setminus L_0=Dom(F)$. Thus
$Dom(F) \setminus D(F)$ is open in $\Sigma$.
\end{proof}


Now, we need introduced some definitions for the next result. We say that 
$L$ is related with $L'$ $(L \sim L')$ if we have that $n(L), n(L') \geq 1$ 
and $L,L' \subset S_0(L) \cap S_0(L')$.

\begin{defi}
Let $L$ be a leaf of $\f$ and $L \in Dom(F)$. If $n(L) \geq 1$, we define the {\em leaf class} 
associated to the leaf $L$ as 
$$ \left\langle L\right\rangle = \left\{ L'\in Dom(F) \left| L' \sim L \right.\right\}.$$
If $n(L) =0$, we say that $\ce L \cd = \left\{L\right\}$.
\end{defi}

\begin{rk}
If $X$ is a vector field of codimension $1$, i.e., $dim (E^c)=2$, one has that 
$\ce L\cd=\left\{L\right\}$ and so $dim (\ce L\cd)=dim(\left\{L\right\})=s$ 
\end{rk}
 
\begin{lemma}
Let $L$ be a leaf in $D(F)$. Then $\ce L\cd$ is a $(s+1)$-submanifold 
(or $s$-submanifold if $\ce L\cd = \left\{L\right\}$) of $\Si$, and whose boundary 
belong to $\partial ^v \Si$.
\end{lemma}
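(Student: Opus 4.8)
The plan is to analyze the structure of a leaf class $\langle L\rangle$ by tracking how the relation $L'\sim L$ cuts out a subset of the leaf space $SL$. Recall that $SL$ is a disjoint union of copies of $B^u[0,1]$, and a leaf class $\langle L\rangle$, viewed inside $\Sigma$, is $\mathcal F$-saturated; so it corresponds to a subset $\pi(\langle L\rangle)\subset SL$ where $\pi\colon\Sigma\to SL$ is the leaf-space projection, and $\langle L\rangle = \mathcal F_{\pi(\langle L\rangle)}$. Since each leaf $L'$ of $\mathcal F$ is a vertical $s$-surface (a graph over $I^s$), proving that $\langle L\rangle$ is an $(s+1)$-submanifold amounts to proving that $\pi(\langle L\rangle)$ is a $1$-submanifold (an arc) of the $u$-dimensional model $B^u[0,1]$, and that the saturation $\mathcal F_{\pi(\langle L\rangle)}$ is embedded with the product-like smoothness coming from the continuity of the foliation; the $s$-submanifold alternative is exactly the degenerate case $\langle L\rangle=\{L\}$, which is the content of the preceding remark when $\dim(E^c)=2$.

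First I would fix $L\subset D(F)$ with $n(L)\ge 1$ (if $n(L)=0$ then by Lemma \ref{D(F)1} one would have $F(L)\subset\partial^v\Sigma$ contradicting $n(L)=0$ via Lemma \ref{properties}-(2), so in fact $L\in D(F)$ forces $n(L)\ge1$, unless $\langle L\rangle=\{L\}$ trivially). Using hypothesis {\bf (A2)} applied to $L$, I get a connected neighborhood $S=S_0\cup S_1\cup S_2$ with $L\subset S_0$ and $F(S_0)=V_{L}\subset\partial^v\Sigma$. The key point is that $S_0$ is precisely the set of leaves near $L$ that are $\sim L$: membership $L'\sim L$ is governed by $L,L'$ lying in the common component $S_0(L)\cap S_0(L')$, and {\bf (A2)} guarantees that $S_0$ is a ``slab'' separating $S_1$ from $S_2$. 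Transversality of the hyperbolic cone field $C_\alpha$ to $\mathcal F$ (Definition \ref{lambda-hyperbolic}-(1)), together with the fact that $F$ maps $S_0$ into $\partial^v\Sigma$ which is a union of leaves, forces $\pi(S_0)$ to be locally a single point or a codimension-$(u-1)$ piece — i.e., locally an arc — inside $B^u[0,1]$; this is where the topology of the singular cross-section genuinely enters, since $V_{L}$ being $\mathcal F$-saturated and lying in $\partial^v\Sigma$ pins down its leaf-space image to have the right dimension. I would then show the relation $\sim$ is an equivalence relation on leaves with $n\ge1$ (reflexivity and symmetry are immediate; transitivity uses that the $S_0$-slabs are nested or disjoint by {\bf (A2)}-(1),(4)), so that $\langle L\rangle$ is well defined and its leaf-space image $\pi(\langle L\rangle)$ is obtained by gluing the local arcs $\pi(S_0(L'))$ along overlaps, hence is a connected $1$-manifold.

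The main obstacle I anticipate is controlling the \emph{boundary} of $\langle L\rangle$ and proving the gluing produces an embedded submanifold rather than something with self-accumulation. For the boundary: an endpoint of the arc $\pi(\langle L\rangle)$ in $B^u[0,1]$ arises either because the arc reaches $\partial B^u[0,1]$ — giving a piece of $\langle L\rangle$ in $\partial^v\Sigma$ as claimed — or because the ``slab'' structure from {\bf (A2)} degenerates, which would happen at a leaf $L_*$ with $n(L_*)<\infty$ whose forward orbit hits a situation governed by {\bf (A2)}-(3) with $n^j(L_*)=0$; but then $F$ is $C^1$ across $V_{L_*}$, so such a leaf is not in $D(F)$, and a limiting argument (using Lemma \ref{remarkD(F)} to extract convergent subsequences of $F(L_k)$ landing in $\partial^v\Sigma\cup\mathcal V$) shows the boundary leaf must lie in $\partial^v\Sigma$. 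For embeddedness and the manifold structure, I would invoke the continuity of the foliation $\mathcal F$ (the $W^{ss}$-foliation, which is a genuine continuous foliation by $C^1$ leaves from \cite{hps}): a continuous foliation restricted over an arc in the leaf space is a fiber bundle over that arc with $s$-surface fibers, hence total space an $(s+1)$-manifold, with boundary equal to the saturation of the arc's endpoints — and those endpoints were just shown to lie in $\partial^v\Sigma$. The finitely-many possible degenerations are all handled by {\bf (A1)-(A2)} and the no-periodic-point hypothesis exactly as in the proof of Lemma \ref{D(F)}.
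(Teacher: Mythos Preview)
Your overall strategy---apply {\bf (A2)} to get the local slab $S_0$, extend by iterating {\bf (A2)} on the boundary leaves of this slab, and terminate by a compactness/finiteness argument---is exactly the paper's approach. The paper's proof is much shorter and more direct: it observes (via Lemma \ref{D(F)}) that $D(F)$ is closed in $\Sigma\setminus L_0$, so $\mathrm{Cl}(S(L)_0)\subset D(F)$; the boundary $\partial S(L)_0$ then consists of leaves $L'\subset D(F)$, to which Lemma \ref{D(F)1} and {\bf (A2)} apply again, and the process terminates because $\Sigma$ has finite diameter. You reach the same conclusion but take a detour through the leaf-space projection, an equivalence-relation analysis, and an appeal to Lemma \ref{remarkD(F)} that the paper does not need here.

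One step in your argument does not work as written: you claim that transversality of the cone field $C_\alpha$ to $\mathcal F$ forces $\pi(S_0)$ to be locally an arc. The cone condition in Definition \ref{lambda-hyperbolic} controls only how $DF$ acts on tangent vectors at points where $F$ is differentiable; it says nothing about the shape of the set $S_0=F^{-1}(V_{L_*})$ inside the leaf space. Whatever dimension control there is on $\langle L\rangle$ comes instead from the separating role of $S_0$ in the decomposition $S=S_0\cup S_1\cup S_2$ and the $\mathcal F$-saturatedness of each piece---not from the hyperbolic cone. (In fairness, the paper's own proof does not carefully establish the $(s{+}1)$-dimensionality either; it focuses on the boundary assertion, which is the part actually used downstream in Lemma \ref{tri-1}.) Your boundary argument, once stripped of the cone-field claim, reduces to the paper's: boundary leaves of $S_0$ lie in $D(F)$, reapply {\bf (A2)}, and stop by compactness.
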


\begin{proof}
By using the Lemma \ref{D(F)}, $D(F)$ is closed in $\Si \setminus L_0$. So, given $L \in Dom(F)$ 
by (A2) there is $S(L)=S(L)_0 \cup S(L)_1 \cup S(L)_2$. Then, $Cl( S(L)_0) \subset D(F)$, 
and as $D(F)$ is $\f$-saturated, one has that $Cl( S(L)_0) \setminus S(L)_0 = \partial S(L)_0$ 
are leaves. Thus, for each $L'\in \partial S(L)_0 \subset D(F)$, by using the Lemma \ref{D(F)1}, 
we obtain that $F(L') \subset \partial^v \Si$. Then, again by (A2) there exists 
$S(L')=S(L')_0 \cup S(L')_1 \cup S(L')_2$. In the same way, we proceeds analogously 
for $S(L')$. Since $\Si$ has finite diameter, we conclude that $\ce L\cd$ has boundary in 
$\partial ^v \Si$.
\end{proof}

We have that if $L \in D(F)$, then $F(L) \in \partial^v \Si$ (Lemma \ref{D(F)1}), and 
this motivate the following definition.

\begin{defi}
We define the {\em discontinuous class of leaves} by the set 
$$\ce D(F) \cd = \left\{ \ce L\cd \left| L \in D(F)\right.\right\}$$
\end{defi} 

\begin{defi}
\label{discrete}
A subset $B$ of $\Sigma$ is {\em ${\mathcal F}$-discrete}
if it corresponds to a set of leaves whose only points of 
accumulations are the leaves in $L_0$.
\end{defi}

\begin{lemma}
\label{tri-1}
If $F$ has no periodic points, then $\ce D(F) \cd $ is discrete.
\end{lemma}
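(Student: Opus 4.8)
The plan is to argue by contradiction. Suppose $\ce D(F)\cd$ is not $\mathcal F$-discrete. Then, by Definition \ref{discrete}, there is a sequence of distinct leaf-classes $\ce L_k\cd \in \ce D(F)\cd$ accumulating on some leaf $L_*$ that is \emph{not} in $L_0$. Passing to the associated $u$-dimensional map $f\colon Dom(f)\subset SL\to SL$, this translates into a sequence of points $f(L_k)$ — or rather of boundary points of the images $f(\ce L_k\cd)$ — accumulating on a point that is not one of the singular leaves $L_{0i}$. Since $D(F)$ is closed in $\Sigma\setminus L_0$ (Lemma \ref{D(F)}) and the accumulation leaf $L_*$ is not in $L_0$, we get $L_*\subset D(F)$, so $L_*$ itself carries a structure $S(L_*)=S_0\cup S_1\cup S_2$ coming from {\bf (A2)}, and $F(L_*)\subset\partial^v\Si$ by Lemma \ref{D(F)1}. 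Hence $n(L_*)\geq 1$, and by Lemma \ref{properties}-(1) (using that $F$ has no periodic points and $\partial^v\Si\subset Dom(F)$) we have $1\leq n(L_*)\leq 2k<\infty$, and $F^{n(L_*)}(L_*)\subset\partial^v\Si\subset Dom(F)$.

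Next I would feed this into Lemma \ref{remarkD(F)}. The leaves $L_k$ accumulating on $L_*$ lie in the neighborhood $S(L_*)=S_0\cup S_1\cup S_2$, so infinitely many of them lie in one of the three pieces; applying the corresponding case of Lemma \ref{remarkD(F)}, a subsequence of $F(L_k)$ converges to one of $F(L_*)$, $F^{n_1(L_*)}(L_*)$, or $F^{n_2(L_*)}(L_*)$, each of which lies in $\partial^v\Si\cup\mathcal V$. Now one distinguishes the two cases of Remark \ref{sll}-type dichotomy encoded in the leaf-class: if $L_*\subset\partial^v\Si$ then $S_1=S_2$ and the only relevant limits are $F(L_*)$ and $F^{1}(L_*)$; if $L_*\subset\Sigma\setminus(\partial^v\Si)$ then by {\bf (A2)}-(4) exactly one of $n^1(L_*),n^2(L_*)$ is $0$ and the other $\geq 1$. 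In the branch where the limit is $F^{m}(L_*)$ with $m\geq 1$: one then chooses a sequence $x_i$ in the appropriate component $S_j$ with $x_i\to x\in L_*$, so $F(x_i)\to F^{m}(x)$; but $F$ restricted to $S_0\cup V_{L_*}$ (or to the relevant saturated piece) is $C^1$ by the last clause of {\bf (A2)}-(3), forcing $F(x_i)\to F(x)$, whence $F^{m}(x)=F(x)$ and, since $F$ is injective, $F^{m-1}(x)=x$ with $m-1\geq 1$ — a periodic point, contradiction. This is exactly the mechanism used in the proof of Lemma \ref{D(F)}.

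The remaining branch is the one where the limit of $F(L_k)$ is $F(L_*)$ itself (the ``$n^j=0$'' situation) — here no periodic-point contradiction is immediate, and this is the step I expect to be the main obstacle. The idea to close it is a counting/finiteness argument: since $n(L)\leq 2k$ for every leaf and each leaf-class $\ce L\cd$ is, by the preceding lemma, an $(s+1)$-submanifold with boundary in $\partial^v\Si$, the map $L\mapsto\ce L\cd$ together with the ``return'' structure partitions the finitely many components of $\Sigma$ into only finitely many ``itinerary types''; an infinite accumulation of distinct classes on a non-singular leaf $L_*$ would produce, via the $f$-images and the fact that $\mathcal V\cup\partial^v\Si$ is a \emph{finite} union of submanifolds, two distinct classes with the same itinerary meeting in $S_0(L_*)$, i.e. $L_k\sim L_*$ for infinitely many $k$, so $\ce L_k\cd=\ce L_*\cd$ — contradicting that the $\ce L_k\cd$ were distinct. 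Making the bookkeeping in this last paragraph precise (tracking how $n(\cdot)$ and the components $S_1,S_2$ behave along the accumulation, and ruling out the degenerate case $L_*\subset\partial^v\Si$ separately) is where the real work lies; everything else is a direct assembly of Lemmas \ref{properties}, \ref{remarkD(F)}, \ref{D(F)1} and \ref{D(F)}.
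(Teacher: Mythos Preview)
Your setup through the identification of $L_*\subset D(F)$ with $1\le n(L_*)\le 2k$ and $F^{n(L_*)}(L_*)\subset Dom(F)$ is correct and matches the paper. The divergence, and the gap, comes immediately after.

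The periodic-point mechanism you import from the proof of Lemma~\ref{D(F)} does not transplant here. In that lemma the standing hypothesis is $x\in Dom(F)\setminus D(F)$, so $F$ is continuous at $x$; the contradiction arises by comparing the limit $F(x_i)\to F(x)$ (from continuity) with $F(x_i)\to F^{n^j(L_*)}(x)$ (from {\bf (A2)}-(3)). In the present lemma $L_*\subset D(F)$ by construction, so $F$ is \emph{discontinuous} at every point of $L_*$, and there is no second limit to play off against the first. Your appeal to ``$F$ restricted to $S_0\cup V_{L_*}$ is $C^1$'' is beside the point, since the sequence you chose lives in $S_1$ or $S_2$, not in $S_0$. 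So the $m\ge 1$ branch does not close, and you are left with only the vague itinerary-counting sketch for the other branch.

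More to the point, you are overlooking the one-line argument that finishes the lemma. Since the classes $\ce L_k\cd$ are distinct from (hence disjoint from) $\ce L_*\cd$, the accumulating leaves must eventually sit in the side pieces $S_1\cup S_2$ of the {\bf (A2)}-neighbourhood of $L_*$; the leaves in $S_0$ are precisely those related to $L_*$ and hence belong to $\ce L_*\cd$. Now invoke {\bf (A2)}-(1) directly: $F(S_1)$ and $F(S_2)$ are contained in $\Sigma\setminus\partial^v\Sigma$, so $F(L_k)\subset\Sigma\setminus\partial^v\Sigma$ for all large $k$. But $L_k\subset D(F)$ forces $F(L_k)\subset\partial^v\Sigma$ by Lemma~\ref{D(F)1}. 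That is the contradiction. This is exactly the paper's proof; no limit analysis via Lemma~\ref{remarkD(F)}, no case split on the $n^j$, and no finiteness-of-itineraries bookkeeping is required. Your ``remaining branch'' paragraph is attacking a difficulty that dissolves once {\bf (A2)}-(1) is used.
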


\begin{proof}
By contradiction, we suppose that
$\ce D(F) \cd$ is not ${\mathcal F}$-discrete.
Then, there is an open neighborhood $U$
of $L_0$ in $\Sigma$ such that $ \ce D(F) \cd \setminus {\mathcal A}$
contains infinitely many classes $\ce L_n \cd$, where 
$${\mathcal A} = \left\{ \ce L\cd \in \ce D(F) \cd \left| \ce L \cd \cap U \neq \emptyset \right.\right\}.$$

By using Lemma \ref{D(F)}, $D(F)$ is closed in $Dom(F)=\Sigma \setminus L_0$, and so 
$\ce D(F) \cd$ is closed in $Dom(F) = \Si \setminus L_0$ too. Since $D(F) \setminus U$ 
is closed in $Dom(F) \setminus U$, one has that $\ce D(F) \cd \setminus {\mathcal A}$ 
is closed. As $U$ is an open neighborhood of $L_0$ and $Dom(F)=\Sigma\setminus L_0$
we obtain that $Dom(F) \setminus U$ is compact in $\Sigma$.
Henceforth $\ce D(F) \cd \setminus {\mathcal A}$ is compact.
So, without loss of generality, we can assume that $\ce L_n \cd$ converges to a class 
$\ce L_* \cd$ of $\ce D(F) \cd \setminus {\mathcal A}$. 
Clearly $ \ce L_* \cd \subset Dom(F)$.
Since $ \ce L_n \cd \subset D(F)$ we have
$F( \ce L_* \cd) \subset \partial^v \Si$ by Lemma \ref{D(F)1}.
It follows that $n(W_*)\geq 1$, for all $W_* \in \ce L_* \cd$.
We also have $n(W_*)\leq2k<\infty$ by Lemma
\ref{properties}-(2) since
$F$ has no periodic points and $\partial^v \Si \subset \Sigma\setminus L_0=Dom(F)$, 
for all $W_* \in \ce L_* \cd$.
By Definition \ref{n(L)} we have $f^{n(L_*)}(L_*)\subset \partial^v \Si \subset Dom(F)$.

By definition $\ce L_n \cd \cap \ce L_* \cd = \emptyset$ for all $n \in \nat$. 
Now, by using the property (A2), for each $L_n$ and for $L_*$ we can choose the following neighborhood 
associated to $\ce L_n \cd$ and $\ce L_* \cd$ as follows:
 
$$CS_n = \bigcup_{W \in \ce L_n \cd} S(W) \quad and \quad CS_* = \bigcup_{W_* \in \ce L* \cd} S(W_*).$$
Since $\ce L_n\cd$ is compact, one has that 
$$CS_n = \bigcup_{i=1}^k S(W_i) \quad and \quad CS_* = \bigcup_{i=1}^k S(W_{*,i}).$$

As $\ce L_n\cd \to \ce L_*\cd$ and $\ce L_n\cd \cap \ce L_*\cd=\emptyset$
we can assume $\ce L_n\cd \subset CS_*\setminus \ce L_*\cd$ for all $n$.
As $\ce L_n\cd \cap \ce L_*\cd=\emptyset$ for all $n$ we
can further assume that
$\ce L_n\cd \subset CS_{1,*}$ where $CS_{1,*}$ is one of
the (possibly equal) connected components
of $CS_{*}\setminus \ce L_* \cd$, i.e., $$CS_{1,*} = \bigcup_{i=1}^k S_1(W_i)$$.
	
As $F(S_1(L_n)) \subset \Sigma\setminus (\partial ^v \Si)$ for all $n \in \nat$
by {\bf (A2)}-(1) we conclude that $F(\ce L_n \cd )\subset \Sigma\setminus (\partial ^v \Si)$ for all $n$.
However, $F(\ce L_n \cd)\subset \partial ^v \Si$ by
Lemma \ref{D(F)1} since $L_n\subset D(F)$ a contradiction.
This proves the lemma.

\end{proof}
 
\bigskip

We need to extend some definitions for next lemmas and propositions. A {\em vertical band} in 
$\Si$ between two vertical $s$-surfaces $L, L'$ in the same component $\Si$ is nothing but a 
cylinder $H$ such that $L, L' \in \partial H$ and whose diameter is $l$, where $l$ represent the 
distance of $L$ to $L'$, i.e., $l= dist(L,L')$. Let us denote by $H(l,L')$ and $H\left[L,L'\right]$ 
the open and vertical band respectively.

Given a $u$-surface $c$, we say that $c$ is {\em tangent} 
to $C_{\alpha}$ if $Dc(t) \in C_{\alpha}(c(t))$ for all $t \in Dom(c) \subset \re^u$. 
A {\em $C_{\alpha}$-spine} of a vertical band $H(L,L')$(or $H \left[L,L'\right]$) is a 
$u$-surface $c \subset H(L,L')$ tangent to $C_{\alpha}$, such that 
$\partial c \subset \partial H(L,L')$ and $ int(c) \subset H(L,L')$.

\bigskip

\begin{lemma}
\label{tri-2}
Let $c \subset Dom(F) \setminus D(F)$ be and open $u$-surface transversal to $\f$. 
If there is $n \geq 1$ and open $C^1$ $u$-surface $c^*$ whose clausure $Cl(c^*) \subset c$ 
and such that $F^i(c^*) \subset (Dom(F) \setminus D(F)$ for all $0 \leq i\leq n-1$ and  
$F^n(c^*)$ covers $c$, then $F$ has a periodic point. 
\end{lemma}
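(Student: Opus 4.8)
The plan is to set up a classical graph-transform / fixed-point argument using the hyperbolic cone field $C_\alpha$. Since $c^*$ is a $C^1$ $u$-surface transversal to $\f$ with $\overline{c^*}\subset c$, and since $F^i(c^*)\subset Dom(F)\setminus D(F)$ for $0\le i\le n-1$ with $F$ being $C^1$ there (by Lemma \ref{D(F)}), the iterate $F^n$ is $C^1$ on $c^*$. First I would observe that $F^n(c^*)$ is again a $u$-surface tangent to $C_\alpha$: by the cone-invariance in Definition \ref{lambda-hyperbolic}-(2), $DF(x)(C_\alpha(x))\subset Int(C_{\alpha/2}(F(x)))\subset C_\alpha(F(x))$, so each forward iterate keeps the tangent directions inside the cone field; in particular $F^n(c^*)$ is transversal to $\f$ as well. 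The hypothesis that $F^n(c^*)$ \emph{covers} $c$ means $c\subset \f_{F^n(c^*)}$, i.e. every leaf of $\f$ meeting $c$ also meets $F^n(c^*)$.

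Next I would pass to the associated $u$-dimensional map $f:Dom(f)\subset SL\to SL$ on the leaf space. Because $F$ is $n$-triangular, $c$ and $c^*$ project to subsets $\pi(c),\pi(c^*)$ of $SL$, and since $c,c^*$ are transversal to $\f$ these projections are (the images of) $C^1$ injective maps on domains in $\re^u$; moreover $\overline{\pi(c^*)}\subset \pi(c)$. The covering hypothesis translates to $f^n(\pi(c^*))\supset \pi(c)$. So I have a $C^1$ map $g:=f^n$ defined on $\pi(c^*)$, with $g(\pi(c^*))\supset \pi(c)\supset \overline{\pi(c^*)}$, and the $\lambda$-expansion (with $\lambda>1$, indeed we only need $\lambda>1$ here) of $F$ along the cone field passes to a uniform expansion estimate for $g$ on $\pi(c^*)$ in the appropriate adapted metric. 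A uniformly expanding $C^1$ map of a $u$-dimensional disk-like region whose image contains the closure of its domain has a fixed point: one can either invoke a degree/Brouwer argument (the "inverse branch" $g^{-1}$ restricted to $\pi(c)$ is a contraction into $\pi(c^*)\subset\pi(c)$, so Banach's fixed point theorem applies) or cite the analogue used in \cite{bm}. This yields $z\in\pi(c^*)$ with $f^n(z)=z$.

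Finally I would lift this fixed leaf back to a genuine periodic point of $F$. The leaf $L:=\f_z$ satisfies $F^n(L)\subset L$, and $F^i(L)\subset Dom(F)$ for $0\le i\le n-1$ (since $c^*$ and its first $n-1$ iterates lie in $Dom(F)\setminus D(F)$, which is $\f$-saturated). Now restrict $F^n$ to the leaf $L$: by the preservation of the vertical foliation, $F^n|_L:L\to L$ is continuous, and $L$ is homeomorphic to a closed $s$-ball $B^{ss}[0,1]$ — more precisely, after intersecting with the relevant piece I get a continuous self-map of a compact convex set — so Brouwer's fixed point theorem gives $p\in L$ with $F^n(p)=p$, which by definition is a periodic point of $F$.

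The main obstacle, and the step I would spend the most care on, is the lift in the last paragraph: one must check that $F^n$ genuinely maps the chosen leaf (or a suitable compact sub-disk of it) into itself rather than merely into a leaf of $\f$, and that the intersection with $Dom(F)$ does not destroy compactness or convexity — equivalently, that the fixed leaf $L$ stays clear of $L_0=\partial Dom(F)$. This is exactly where the quantitative hypothesis $\lambda>1$ (and in the full theorem $\lambda>2$) and the $\f$-saturation of $Dom(F)\setminus D(F)$ get used; in the three-dimensional case of \cite{bm} this is handled by a direct interval-map argument, and here it requires the $u$-dimensional contraction estimate above together with the continuity of $F|_L$ guaranteed by the definition of "preserves $\f$".
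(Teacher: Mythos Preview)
Your argument follows essentially the same route as the paper: project $c$ and $c^*$ to $u$-balls in the leaf space $SL$, produce a fixed leaf $L_{**}$ for $f^n$ from the inclusion $Cl(c^*)\subset c\subset f^n(c^*)$, and then apply Brouwer to the continuous self-map $F^n|_{L_{**}}$ of the $s$-ball $L_{**}$. The only point to tidy up is that your appeal to Lemma~\ref{D(F)} silently uses the hypothesis ``$F$ has no periodic points''; the paper handles this by framing the whole argument as a proof by contradiction, and you should do the same (if $F$ already has a periodic point there is nothing to prove). Your explicit use of the $\lambda$-expansion to obtain the leaf-space fixed point via a contracting inverse branch is more detailed than the paper, which simply asserts that $f^n$ has a fixed point --- your version is a welcome clarification rather than a different method, and the worry you raise in your final paragraph about the lift is unfounded: once $f^n(L_{**})=L_{**}$, the definition of ``preserves $\f$'' gives $F^n(L_{**})\subset L_{**}$ with $F^n|_{L_{**}}$ continuous, so Brouwer applies directly.
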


\begin{proof}
We prove the lemma by contradiction. Then, we suppose that $F$ has no 
periodic point. So, by using the Lemma \ref{D(F)}, $Dom(F) \setminus D(F)$ 
is saturated and $F\left|_{Dom(F) \setminus D(F)}\right.$ is $C^1$.
Then, $c$ and $c^*$, projects (via $\f$) into two $u$-balls in $SL$ still denoted by 
$c$ and $c^*$ respectively. The assumptions imply that $f^i(c)$ is defined for all 
$0\leq i \leq n-1$ and $$Cl(c^*) \subset c \subset f^n(c^*).$$
Then, $f^n$ has a periodic point $L_{**}$. As $F^n(L_{**}) \subset f(L_{**}) = L_{**}$ and 
$F^n\left|_{L_{**}}\right.$ is continuous, then the {\em Brower fixed point Theorem} 
implies that $F^n$ has a fixed point. This fixed point represents a periodic point of $F$.
\end{proof}

\begin{lemma}
\label{tri-3}
$F$ carries a $u$-surface $c \subset Dom(F) \setminus D(F)$ tangent to $C_{\alpha}$ 
(with volume $V(c)$) into a $u$-surface tangent to $C_{\alpha}$ 
(with volume $\geq \lambda \cdot V(c)$).
\end{lemma}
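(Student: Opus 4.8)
The plan is to combine the defining $\lambda$-expansion property of a hyperbolic $n$-triangular map (Definition \ref{lambda-hyperbolic}) with the fact that, away from its discontinuity set, $F$ is a $C^1$ diffeomorphism (Lemma \ref{D(F)}), so the change-of-variables formula for the $u$-dimensional volume applies fibrewise along $c$. First I would fix a $u$-surface $c\subset Dom(F)\setminus D(F)$ tangent to $C_\alpha$, parametrized by a $C^1$ injective map $c\colon I^u\to\Sigma$ with $Dc(t)$ spanning a $u$-plane inside $C_\alpha(c(t))$ for every $t$. Since $c\subset Dom(F)\setminus D(F)$ and this set is $\f$-saturated, open, and $F$ restricted to it is $C^1$ (Lemma \ref{D(F)}), the composition $F\circ c\colon I^u\to\Sigma$ is again a $C^1$ map; cone-invariance in Definition \ref{lambda-hyperbolic}(2) gives $DF(c(t))\big(C_\alpha(c(t))\big)\subset Int\big(C_{\alpha/2}(F(c(t)))\big)$, so $F\circ c$ is again tangent to $C_\alpha$ (in fact to $C_{\alpha/2}$), and in particular $D(F\circ c)(t)$ has rank $u$, so $F(c)$ is a genuine $u$-surface.

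Next I would compute the volume. The $u$-dimensional volume of $F(c)$ is
$$
V(F(c))=\int_{I^u}\big|\det\big(D(F\circ c)(t)^{\!*}\,D(F\circ c)(t)\big)\big|^{1/2}\,dt
=\int_{I^u}\operatorname{Jac}\big(DF(c(t))\big|_{T_{c(t)}c}\big)\cdot\big|\det\big(Dc(t)^{\!*}Dc(t)\big)\big|^{1/2}\,dt,
$$
where $\operatorname{Jac}$ denotes the Jacobian of the linear map $DF(c(t))$ restricted to the $u$-plane $T_{c(t)}c\subset C_\alpha(c(t))$. The key estimate is that this fibrewise Jacobian is at least $\lambda$: indeed, $DF(x)$ maps the $u$-plane $T_xc$ isomorphically onto its image, and by Definition \ref{lambda-hyperbolic}(2) every unit vector $v_x\in T_xc\subset C_\alpha(x)$ satisfies $\|DF(x)v_x\|\ge\lambda\|v_x\|$, so the smallest singular value of $DF(x)|_{T_xc}$ is $\ge\lambda$, whence its determinant in absolute value is $\ge\lambda^u\ge\lambda$ (as $u\ge 1$; here I use $\lambda>2>1$). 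Plugging this lower bound into the integral yields $V(F(c))\ge\lambda\cdot\int_{I^u}|\det(Dc(t)^{\!*}Dc(t))|^{1/2}\,dt=\lambda\cdot V(c)$, which is the claim.

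The main obstacle is making the volume computation rigorous across the places where $c$ or its image touches the boundary stratification of $\Sigma$ — i.e., checking that $F(c)$ really is an (open) $u$-surface in the sense of the paper's definition and that the change-of-variables formula is legitimate there. Since by hypothesis $c\subset Dom(F)\setminus D(F)$, which is open and on which $F$ is $C^1$, this is handled by Lemma \ref{D(F)}; one only needs that $c$ is tangent to $C_\alpha$ and that $C_\alpha$ is transversal to $\f$ (Definition \ref{lambda-hyperbolic}(1)), so that $T_xc$ never degenerates and $F|_c$ stays an immersion. A minor point to record is that the estimate is uniform because $\lambda$ is a fixed constant independent of $x$ and of the surface $c$; no compactness of $c$ beyond its being the continuous image of $I^u$ is needed, and the integrals are finite since $\Sigma$ has finite diameter. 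Everything else is the routine bookkeeping of the area formula, which I would not spell out in full.
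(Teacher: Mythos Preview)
Your argument is correct and is the natural one; the paper itself does not give a self-contained proof here but simply refers to \cite{bm}, where the three-dimensional analogue is established by precisely this mechanism (cone invariance plus the uniform bound $\|DF(x)v\|\ge\lambda\|v\|$ on cone vectors, followed by the area formula). One small remark: your appeal to Lemma \ref{D(F)} for the $C^1$ regularity of $F$ on $Dom(F)\setminus D(F)$ tacitly uses the standing hypotheses of Theorem \ref{op2} --- in particular that $F$ has no periodic points and satisfies (A1)--(A2) --- which is harmless since Lemma \ref{tri-3} is only ever applied under those assumptions.
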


\begin{proof}
See \cite{bm}.
\end{proof}

\begin{lemma}
\label{tri-4}
Suppose that $F$ has no periodic points. Let $L,L'$ be different leaves
in $D(F)$ such that the open vertical band $H(L,L')\subset Dom(F)\setminus D(F)$.
If $c$ is a $C_\alpha$-spine of $H(L,L')$,
then $F(Int(c))$ covers a vertical band
$H(W,W')$ with
$$ W,W'\subset \partial ^v \Si \cup {\mathcal V}. $$
\end{lemma}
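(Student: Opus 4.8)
The plan is to push the problem down to the leaf space $SL$, using that $F$ is regular on the open band while \textbf{(A2)} controls its behaviour on the two boundary leaves $L,L'$. \textbf{Step 1 (regularity and boundary data).} Since $F$ has no periodic points and $\partial^v\Si\subset \Sigma\setminus L_0=Dom(F)$, Lemma~\ref{D(F)} gives that $Dom(F)\setminus D(F)$ is $\f$-saturated and that $F$ restricted to it is $C^1$; as $H(L,L')\subset Dom(F)\setminus D(F)$, the map $F|_{H(L,L')}$ is $C^1$, preserves $\f$ there, and hence induces a continuous map on the corresponding piece of $SL$. On the other hand $L,L'\subset D(F)$, so $F(L),F(L')\subset\partial^v\Si$ by Lemma~\ref{D(F)1}; since $\partial^v\Si$ is $\f$-saturated and $F$ preserves $\f$, this forces $n(L),n(L')\geq1$ (Lemma~\ref{properties}), and also $n(L),n(L')\leq 2k<\infty$ and $F^{n(L)}(L),F^{n(L')}(L')\subset\partial^v\Si\subset Dom(F)$ (Lemma~\ref{properties} again). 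Hence \textbf{(A2)} is applicable at both $L$ and $L'$.

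\textbf{Step 2 (the endpoint leaves).} Choose leaves $L_k\subset H(L,L')$ with $L_k\to L$ (the leaves met by $c$ near its end lying on $L$ will do). For large $k$ each $L_k$ lies in the neighbourhood $S=S_0(L)\cup S_1(L)\cup S_2(L)$ of Remark~\ref{vl}, so after passing to a subsequence all $L_k$ lie in a single component $S_j(L)$, and Lemma~\ref{remarkD(F)} produces a further subsequence with $F(L_{k_l})\to F^{n^j(L)}(L)=:W$, where $W$ is a leaf belonging to $\partial^v\Si\cup{\mathcal V}$. Running the same argument at the other end yields a leaf $W'\in\partial^v\Si\cup{\mathcal V}$ together with leaves $L'_m\subset H(L,L')$, $L'_m\to L'$, with $F(L'_m)\to W'$; a short argument shows $W\neq W'$, so that $H(W,W')$ is a genuine vertical band.

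\textbf{Step 3 (covering).} By Lemma~\ref{tri-3}, $F(Int(c))$ is a $u$-surface tangent to $C_\alpha$, hence transversal to $\f$ because the cone field $C_\alpha$ is. The spine $c$ is transversal to $\f$ with $Int(c)\subset H(L,L')$ and $\partial c\subset\partial H(L,L')$, so $c$ crosses every leaf of $H(L,L')$ and has $L,L'$ as its two extreme leaves. Applying the continuous induced leaf map of Step~1 and using Step~2 (the $F$-images of the leaves of $c$ near the $L$-, resp.\ $L'$-, end accumulate on $W$, resp.\ $W'$), the connected set $F(Int(c))$ meets every leaf lying between $W$ and $W'$; this is exactly the assertion that $F(Int(c))$ covers the vertical band $H(W,W')$ with $W,W'\subset\partial^v\Si\cup{\mathcal V}$.

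\textbf{Main obstacle.} The real work is Step~2: $F$ need not extend continuously to the discontinuity leaves $L,L'$, and one must invoke \textbf{(A2)} (through Lemma~\ref{remarkD(F)}) to know that along each connected piece of a neighbourhood the limit exists and equals an iterate $F^{n^j(L)}(L)$, and that part~(2) of \textbf{(A2)} forces that iterate into $\partial^v\Si\cup{\mathcal V}$. A secondary point is the bookkeeping in Step~3 that upgrades the two accumulation statements into honest coverage of the whole segment of leaves joining $W$ and $W'$; in the three-dimensional situation of \cite{bm} this is just the intermediate value theorem, and in general it is carried out with the induced leaf map exactly as there.
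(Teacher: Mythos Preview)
Your proposal is correct and follows essentially the same approach as the paper's own proof: regularity on $H(L,L')$ via Lemma~\ref{D(F)}, the identification $F(L),F(L')\subset\partial^v\Si$ via Lemma~\ref{D(F)1}, the extraction of the endpoint leaves $W,W'\in\partial^v\Si\cup{\mathcal V}$ via Lemma~\ref{remarkD(F)}, and then the covering conclusion. You are slightly more explicit than the paper in Step~3 (the paper simply notes $W\neq W'$ since $F$ preserves $\f$ and declares $\f_{F(Int(c))}=H(W,W')$), but the substance is identical.
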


\begin{proof}
By using Lemma \ref{D(F)} we have that
$F/_{H(L,L')}$ is $C^1$ because $H(L,L') \subset Dom(F) \setminus D(F)$. And
Lemma \ref{D(F)1} implies
\begin{equation}
\label{rastrojo}
F(L),F(L') \subset \partial ^v \Si
\end{equation}
because $L,L'\subset D(F)$.
Clearly $L,L'\subset Dom(F)$ and then $n(L),n(L')$ are defined.

By (\ref{rastrojo}) we have $n(L),n(L')\geq 1$.
Then, $1\leq n(L),n(L')<\infty$ by Lemma \ref{properties}-(1) since
$F$ has no periodic points and $ \partial ^v \Si \subset \Sigma\setminus L_0=Dom(F)$. 
By the same reason
$$F^{n(L)}(L),F^{n(L')}(L')\subset Dom(F).$$
If there exists a sequence converging to $L$ or $L'$, by using the Lemma \ref{remarkD(F)} 
exist the limit $F(L_n)_{n\in \nat}$ and this limit belong to $\partial ^v \Si \cup{\mathcal V}$. 
Let $W$ and $W'$ be these limits respectively. 

Now, let $c$ be a $C_\alpha$-spine of $H(L,L')$.
To fix ideas we assume $\partial c \subset \partial H(L,L')$, 
and this implies that there are $p,q \in \partial c$ such that 
$p \in L$ and $q \in L'$.
As $Int(c)\subset H(L,L')\subset Dom(F)\setminus D(F)$
we have that $F(Int(c))$ is defined. As $F/_{H(L,L')}$ is $C^1$ we
have that $F(Int(c))$ is a $u$-surface whose boundary containing points that belong 
in $\partial ^v \Si \cup{\mathcal V}$.

Clearly $W\neq W'$ because $F$ preserves ${\mathcal F}$.
Then, ${\mathcal F}_{F(Int(c))}=H(W,W')$ is an open vertical band, where 
$W, W' \subset \partial ^v \Si \cup{\mathcal V}$.
\end{proof}



\begin{defi}
Let $p$ be a point of $M$. We define the {\em radius of injectivity} $inj(p)$ 
at a point $p$ as the largest radius for which the exponential map at $p$ 
is a injective map, i.e.,
$$inj(p) = Sup \left\{ r>0 \left|  exp_p: B(0,r) \longrightarrow M \quad is \quad injective \right.\right\}.$$

Also, we say that the radius of injectivity  of the manifold $M$ is the infimum of the radius at all points, i.e,.

$$inj(M) = Inf \left\{ inj(p) \left| p \in M \right.\right\}.$$


\end{defi}


\begin{lemma}
\label{tri-5}
Suppose that $F$ has no periodic points.
For every open $u$-surface $c\subset Dom(F)\setminus
D(F)$ tangent to $C_\alpha$ there are an
open $u$-surface
$c^*\subset c$ and $n'(c)>0$ such that
$F^j (c^*)\subset Dom(F)\setminus D(F)$
for all $0\leq j \leq n'(c)-1$ and
$F^{n'(c)}(c^*)$ covers a vertical band
$H(W,W')$ with
$$
W,W'\subset \partial^v \Sigma \cup {\mathcal V} \cup {\mathcal L}.$$
\end{lemma}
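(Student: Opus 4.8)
The plan is to bootstrap from Lemma~\ref{tri-4} (which already produces the covered band from a single $C_\alpha$-spine over a band $H(L,L')$ lying entirely in $Dom(F)\setminus D(F)$) to the general case of an arbitrary open $u$-surface $c\subset Dom(F)\setminus D(F)$ tangent to $C_\alpha$, at the cost of enlarging the target set of leaves from $\partial^v\Sigma\cup\mathcal V$ to $\partial^v\Sigma\cup\mathcal V\cup\mathcal L$. Throughout we argue by contradiction, assuming $F$ has no periodic point, so that by Lemma~\ref{D(F)} the set $Dom(F)\setminus D(F)$ is $\mathcal F$-saturated, open in $\Sigma$ (Lemma~\ref{D(F)2}), and $F$ restricted to it is $C^1$; moreover by Lemma~\ref{tri-1} the discontinuity classes $\langle D(F)\rangle$ form an $\mathcal F$-discrete family, whose only accumulation leaves lie in $L_0$.

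\emph{Step 1: grow the surface by volume until it meets a discontinuity.} Starting from $c$, apply Lemma~\ref{tri-3} repeatedly: each time $c$ and all its iterates stay inside $Dom(F)\setminus D(F)$, the image $F(c)$ is again a $u$-surface tangent to $C_\alpha$ with volume multiplied by at least $\lambda>2$. Since $\Sigma$ has finite total volume, this process cannot continue indefinitely; hence after finitely many steps some iterate $F^{m}(c)$ must hit $D(F)$ — i.e., $F^m(c)$ has a point whose leaf lies in $D(F)$, and by $\mathcal F$-saturation $F^m(c)$ crosses a whole discontinuity leaf $L\subset D(F)$ (and, by the volume growth and transversality to $\mathcal F$, actually crosses a pair of such leaves $L,L'$ bounding an open vertical band $H(L,L')\subset Dom(F)\setminus D(F)$ whose projection is contained in the projection of $F^m(c)$). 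If instead the accumulation of discontinuity leaves near $L_0$ is what blocks the growth, then $F^m(c)$ covers a neighborhood of a singular leaf $L_{0i}$, and the limit $\lim_{L\to L_{0i}}f(L)$ — when it exists — is precisely an element $K_i\subset\mathcal L$; this is the case that forces $\mathcal L$ into the conclusion.

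\emph{Step 2: pull back a spine and invoke Lemma~\ref{tri-4}.} Once $F^m(c)$ covers the band $H(L,L')$ with $L,L'\subset D(F)$, restrict to a sub-$u$-surface $\hat c\subset c$ with $Cl(\hat c)\subset c$ whose image $F^m(\hat c)$ is a $C_\alpha$-spine of $H(L,L')$ (possible because $F^m$ is a $C^1$ diffeomorphism on the relevant open saturated set and $C_\alpha$ is forward-invariant under $DF$). Apply Lemma~\ref{tri-4} to this spine: $F(Int(F^m(\hat c)))=F^{m+1}(\hat c)$ covers a vertical band $H(W,W')$ with $W,W'\subset\partial^v\Sigma\cup\mathcal V$. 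Set $c^*=\hat c$ and $n'(c)=m+1$; by construction $F^j(c^*)\subset Dom(F)\setminus D(F)$ for $0\le j\le n'(c)-1$ and $F^{n'(c)}(c^*)$ covers $H(W,W')$. In the alternative (Step~1, singular-leaf) case, one of $W,W'$ is instead the limit leaf $K_i\in\mathcal L$, which is why the statement allows $W,W'\in\partial^v\Sigma\cup\mathcal V\cup\mathcal L$.

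\emph{Main obstacle.} The delicate point is Step~1: controlling \emph{how} the growing iterates $F^j(c)$ first encounter $D(F)$, and in particular ensuring that they cross a genuine \emph{pair} of discontinuity (or singular) leaves bounding a non-degenerate vertical band, rather than merely touching $D(F)$ tangentially or in a single leaf. This requires combining the transversality of $C_\alpha$ to $\mathcal F$ (so $F^j(c)$ always projects to a genuine $u$-ball in $SL$ of positive size), the volume lower bound from Lemma~\ref{tri-3} (so the projected balls cannot shrink), and the $\mathcal F$-discreteness of $\langle D(F)\rangle$ from Lemma~\ref{tri-1} (so that, away from $L_0$, discontinuity leaves are isolated and a sufficiently large projected ball must contain two of them — or else accumulate on $L_0$, giving the $\mathcal L$ case). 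Making this dichotomy precise, and checking that the passage to a sub-surface $\hat c$ with $Cl(\hat c)\subset c$ in Step~2 can be done while preserving the covering property, is where the real work lies; the rest is an assembly of the preceding lemmas.
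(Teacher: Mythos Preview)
Your overall strategy matches the paper's: grow $c$ by iteration until it spans a band $H(L,L')$ with $L,L'\in D(F)\cup L_0$, then feed that band to Lemma~\ref{tri-4} (or read off a limit leaf in $\mathcal L$ when one boundary is a singular leaf). The case split in your Step~2 is exactly the paper's.

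However, your Step~1 contains a genuine gap, and the ``main obstacle'' paragraph does not close it. You assert that once $F^m(c)$ first meets $D(F)$ it ``actually crosses a pair of such leaves $L,L'$''. This is false in general: the first time an iterate touches $D(F)\cup L_0$ it may well meet a \emph{single} leaf class $\langle L_1\rangle$. At that moment you cannot iterate the whole surface further (since $F$ is not $C^1$ across $\langle L_1\rangle$), and your heuristic ``a sufficiently large projected ball must contain two of them'' fails because the ball need not be large when it first hits a discontinuity leaf. Discreteness of $\langle D(F)\rangle$ alone does not force a second crossing.

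The paper resolves this with a bisection argument that is precisely where the hypothesis $\lambda>2$ is used. If $F^{N_1}(c_1)$ meets $D(F)\cup L_0$ in a single leaf class $\langle L_1\rangle$, take the connected component $c_2^*$ of $F^{N_1}(c_1)\setminus\langle L_1\rangle$ of larger volume (so $V(c_2^*)\ge\tfrac12\,V(F^{N_1}(c_1))$), pull it back to $c_2=F^{-N_1}(c_2^*)\subset c_1$, and continue iterating. Because $V(F^{N_1}(c_1))\ge\lambda^{N_1}V(c_1)\ge\lambda\,V(c_1)$, one gets $V(F^{N_1}(c_2))\ge\beta\,V(c_1)$ with $\beta=\lambda/2>1$. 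Repeating produces a nested sequence $c_1\supset c_2\supset\cdots$ with $V(F^{N_l}(c_{l+1}))\ge\beta^l V(c_1)\to\infty$, which is impossible in a bounded $\Sigma$; hence at some stage $l_0$ the iterate $F^{N_{l_0}}(c_{l_0})$ must intersect $D(F)\cup L_0$ in \emph{two} different leaf classes, yielding the desired band. Without this halving device (and the factor $2$ in $\lambda>2$) your Step~1 does not go through.
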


\begin{proof}
Let $c\subset Dom(F)\setminus D(F)$ be
an $u$'surface tangent to $C_\alpha$.

In the same way of \cite{bm}, the proof is based on the following, 
that it will be modified for the higher dimensional case. . 
This claim will be proved by modifying the same arguments used in \cite{bm},\cite{gw} 
and a similar argument used by \cite{bpv} beside the radius of injectivity definition.

\begin{claim}
\label{chegou}
There are an open $u$-surface $c^{**}\subset c$ and
$n''(c)>0$ such that $F^j (c^{**})\subset Dom(F)\setminus D(F)$
for all $0\leq j \leq n''(c)-1$ and
$F^{n''(c)}(c^{**})$
covers an open vertical band
$$H(L,L')\subset Dom(F)\setminus \ce D(F) \cd ,$$
where $L,L'$ are different leaves
in $D(F)\cup L_0$.
\end{claim}

\begin{proof}
For every open $u$-surface $c' \subset Dom(F)\setminus D(F)$ tangent to
$C_\alpha$ we define
$$N(c')= \sup\left\{n\geq 1:
F^j(c')\subset Dom(F)\setminus \ce D(F) \cd, \forall 0 \leq j \leq n-1\right\}.$$
Note that $1 \leq N(c')  < \infty$
because $\lambda  > 1$ and
$\Sigma$ has finite diameter.
In addition, $F^{N(c')}(c')$ is a $u$-surface tangent
to $C_\alpha$ with
$$
F^{N(c')}(c') \cap (D(F)\cup L_0)
\neq \emptyset
$$
because $Dom(F)=\Sigma\setminus L_0$.

Define the number $\beta$ by
$$
\beta=(1/2)\cdot\lambda.
$$
Then, $\beta>1$ since $\lambda>2$.
Define $c_1=c$ and
$N_1=N(c_1)$.

Since $F^{N_1}(c_1)$ is a open $u$-surface, 
if $F^{N_1}(c_1)$ intersects $\ce D(F) \cd \cup L_0$
in a unique leaf class $\ce L_1 \cd$, then 
$F^{N_1}(c_1)\setminus \ce L_1 \cd$ has two connected components. 
In this case we define

\begin{itemize}
\item
$c_2^*=$ the connected component of
$F^{N_1}(c_1)\setminus \ce L_1 \cd $ that contain a ball whose 
radius of injectivity is greater or equal to any ball of the complement.
\item
$c_2=F^{-N_1}(c_2^*)$.
\end{itemize}

The following properties hold,

\begin{enumerate}
\item [1)]
$c_2\subset c_1$ and then $c_2$ is an open $u$-surface 
tangent to $C_\alpha$.
\item [2)]
$F^j(c_2)\subset Dom(F)\setminus \ce D(F) \cd $,
for all $0\leq j\leq N_1$.
\item [3)]
$V(F^{N_1}(c_2))\geq \beta\cdot V(c_1)$.
\end{enumerate}

In fact, the first property
follows because $F^{N_1}/_{{\mathcal F}_{c_2}}$
is injective and $C^1$.
The second one follows from
the definition of
$N_1=N(c_1)$ and from the fact
that $c_2^* = F^{N_1}(c_2)$ does not intersect
any leaf in
$ \ce D(F) \cd \cup L_0$.
The third one follows from
Lemma \ref{tri-3} because
$$
V( F^{N_1}(c_2)) =
V( c^*_2) \geq
(1/2) \cdot V( F^{N_1}(c_1) )\geq
$$
$$
\geq (1/2)\cdot \lambda^{N_1} V( c_1 ) \geq (1/2)\cdot\lambda V( c_1)
= \beta \cdot V( c_1 )
$$
since $\lambda>2$ and $N_1 \geq 1$.

Next we define $N_2=N(c_2)$.
The second property implies
$N_2>N_1$.
As before, if $F^{N_2}(c_2)$ intersects
$\ce D(F)\cd \cup L_0$
in a unique leaf class $\ce L_2 \cd$,
then $F^{N_2}(c_2)\setminus \ce L_2\cd $ has two connected components. 
In such a case we define analogously $c_3^*$
and also $c_3=F^{-N_2}(c_3^*)$.

As before
$$
V( F^{N_3}(c_3) )= V(c^*_3) \geq (1/2)\cdot V(F^{N_2}(c_2))
\geq (1/2)\cdot\lambda^{N_2-N_1} V(F^{N_1}(c_2)) \geq \beta^2V(c_1)
$$
because of the third property. So,

\begin{enumerate}
\item [1)]
$c_3\subset c_2$ and $c_3$ is an open $u$-surface tangent to
$C_\alpha$.
\item [2)]
$F^j(c_3)\subset Dom(F)\setminus \ce D(F)\cd $
for all $0\leq j\leq N_2$.
\item [3)]
$V(F^{N_2}(c_3))\geq \beta^2\cdot V( c_1)$.
\end{enumerate}

In this way we get a sequence
$N_1<N_2<N_3<\cdots<N_l<\cdots$ of positive integers
and a sequence $c_1,c_2,c_3,\cdots c_l,\cdots$
of open $u$-surfaces (in $c$)
such that the following properties hold
$\forall l \geq 1$

\begin{enumerate}
\item [1)]
$c_{l+1}\subset c_l$  and $c_{l+1}$ is
an open $u$-surface tangent to $C_\alpha$.
\item [2)]
$F^j(c_{l+1})\subset Dom(F)\setminus \ce D(F)\cd$
for all $0\leq j \leq N_l$.
\item [3)]
$V(F^{N_l}(c_{l+1}) ) \geq \beta^l \cdot V(c_1)$.
\end{enumerate}

The sequence $c_l$ must stop
by Property (3) since
$\Sigma$ has finite diameter.
So, {\em there is a
first integer $l_0$ such that
$F^{N(c_{l_0})}(c_{l_0})$ intersects $\ce D(F)\cd \cup L_0$
in two
different leaves class $\ce L\cd ,\ce L'\cd$}. Note that
these classes must be contained in the same component
of $\Sigma$ since $F^{N(c_{l_0})}(c_{l_0})$
is connected. Hence, we can suposse that the vertical band $H(L,L')$
bounded by $L,L'$ is well defined.
We can assume that $H(L,L')\subset
Dom(F)\setminus \ce D(F)\cd $ because
$\ce D(F)\cd $ is ${\mathcal F}$-discrete
by Lemma \ref{tri-1}.
Choosing $c^{**}=c_{l_0}$ and
$n''(c)=N_{l_0}$
we get the result.
\end{proof}

Now we finish the proof of
Lemma \ref{tri-5}.
Let $c^{**},n''(c)$ and $L,L'\subset D(F)\cup L_0$
be as in Claim
\ref{chegou}.
We have three possibilities:
$L,L'\subset D(F)$; 
$L\subset L_0$ and $L'\subset D(F)$;
$L\subset D(F)$ and $L'\subset L_0$.
We only consider the two first cases
since the later is similar
to the second one.

First we assume that $L,L'\subset D(F)$.
As
$F^{n''(c)}(c^{**})$ is tangent to $C_\alpha$, and
covers $H(L,L')$,
we can assume that $F^{n''(c)}(c^{**})$ itself
is a $C_\alpha$-spine of $H(L,L')$.
Then, applying Lemma \ref{tri-4}
to this spine, one gets that
$F^{n''(c)+1}(c^{**})$
covers a vertical band
$H(W,W')$ with
$$
W,W'\subset \partial ^v \Si \cup {\mathcal V}
$$
In this case
the choices $c^*=c^{**}$ and $n'(c)=n''(c)+1$
satisfy the conclusion of Lemma \ref{tri-5}.

Finally we assume that
$L\subset L_0$ and $L'\subset D(F)$.
As $L\subset L_0$ we have $L=L_{0i}$
for some $i=1,\cdots,k$.

On the one hand, $H(L_{0i},L')=H(L,L')\subset Dom(F)\setminus \ce D(F) \cd$
and then $\ce D(F)\cd \cap H(L_{0i},L')=\emptyset$.
So,
Lemma \ref{tri-1} implies that exists the limit $K_i$.
Consequently
$$
K_i\in
{\mathcal L}.
$$

On the other hand,
$F(L')\subset \partial^v \Si$
by Lemma \ref{D(F)1} since $L'\subset D(F)$.
It follows that $1\leq n(L')$ and also
$n(L')\leq 2k$ by Lemma \ref{properties}-(1)
since $F$ has no periodic
points and
$\partial^v \Si\subset \Sigma\setminus L_0=Dom(F)$.
Since $F^{n(L')}(L')\subset \partial^v \Si$
by the definition of $n(L')$
we obtain
$$
F^{n(L')}(L')\subset Dom(F).
$$

Then, Lemma \ref{remarkD(F)} applied to $L'$
implies that $lim_{L\rightarrow L'}f(L) = f^*(L')$ exists and satisfies
$$
f*(L')\subset (\partial^v \Si)
\cup
{\mathcal L}.
$$
But
$F(H(L_{0i},L'))$ (and so $F(F^{n''(c)}(c^{**}))$)
covers $H(K_i,f*(L'))$
since $H(L_{0i},L')\subset Dom(F)\setminus \ce D(F)\cd$.
Setting $W=K_i$ and $W'=f*(L')$ we
get
$$
W,W'\subset
(\partial^v \Si) \cup
{\mathcal V}
\cup {\mathcal L}.
$$
(Recall the definition of ${\mathcal V}$
in Definition \ref{limite-f})
Then,
$F(F^{n''(c)}(c^{**}))$ covers $(W,W')$ as in the statement.
Choosing
$c^*=c^{**}$
and $n'(c)=n''(c)+1$ we obtain
the result.
\end{proof}

\subsection{Proof of the Theorem \ref{op2}}
\label{to2}
Finally we prove Theorem \ref{op2}.
Let $F$ be a $\lambda$-hyperbolic
$n$-triangular map satisfying {\bf (A1)}-{\bf (A2)}
with $ \lambda  > 2$ and $Dom(F)=\Sigma \setminus L_0$.
We assume by contradiction that the following
property holds:

\begin{description}
\item[(P)]
$F$ has no periodic points.
\end{description}

Since $\partial ^v \Si \subset \Sigma\setminus L_0$
and $\Sigma\setminus L_0=Dom(F)$ we also have
$$
\partial ^v \Si \subset Dom(F).
$$
Then, the results in the previous subsections apply.
In particular, we have that
$Dom(F)\setminus D(F)$ is open in $\Sigma$
(by Lemma \ref{D(F)2}) and that
$\ce D(F) \cd$ is
${\mathcal F}$-discrete
(by Lemma \ref{tri-1}-(1)). All together imply that
$Dom(F)\setminus D(F)$ is open-dense
in $\Sigma$.

Now, let ${\mathcal B}$ be a family of open
vertical bands of the form $H(W,W')$ with
$$
W,W'\subset \partial ^v \Si \cup
{\mathcal V}\cup{\mathcal L}.
$$
It is clear that
${\mathcal B}=\{B_1,\cdots, B_m\}$
is a finite set.
In ${\mathcal B}$ we define the relation
$B\leq B'$ if and only if
there are an open $u$-surfaces $c\subset B$
tangent to $C_\alpha$ with closure $\text{CL}(c)\subset Dom(F)\setminus \ce D(F)\cd$, an open $u$-surface
$c^{*}\subset c$ and
$n> 0$ such that
$$
F^j(c^{*})\subset Dom(F)\setminus \ce D(F) \cd,
\,\,\,\,\,\,\,\,\,\,\,\,\,\,\,\,\,\,
\forall 0\leq j \leq n-1,
$$
and
$F^{n}(c^{*})$ covers
$B'$.

As $Dom(F)\setminus D(F)$ is open-dense
in $\Sigma$, and the bands in ${\mathcal B}$
are open, we can use
Lemma \ref{tri-5} to prove that
for every $B\in {\mathcal B}$
there is $B'\in {\mathcal B}$ such that
$B\leq B'$.
Then, we can construct
a chain
$$
B_{j_1}\leq B_{j_1}\leq B_{j_2}\leq \cdots,
$$
with $j_i\in \{1,\cdots, m\}$ ($\forall i$) and
$j_1=1$.
As ${\mathcal B}$ is finite
it would exist
a closed sub-chain
$$
B_{j_i}\leq B_{j_{i+1}}\leq
\cdots \leq B_{j_{i+s}}\leq B_{j_i}.
$$
Hence there a positive integer $n$ such that
$F^n(B_{j_i})$ covers $B_{j_i}$.
Applying Lemma \ref{tri-2} to suitable
$u$-surfaces $c^*\subset Cl(c^*)\subset c\subset B_{j_i}$ we obtain that
{\em $F$ has a periodic point}.
This contradicts {\bf (P)} and the proof follows.

\subsection{Proof of the Main Theorem}

\begin{proof}
Let $X$ be a sectional Anosov flow on a compact $n$-manifold $M$. 
We prove the Theorem by contradiction, i.e, to prove that $M(X)$ has a periodic orbit
we assume that this is not so. Fix $\lambda > 2$. As 
there is a singular cross-section $\Si$ close to $M(X)$ such that if $F$ is the return map of
the refinement $\Si(\delta)$ of $\Si$, 
then there is $\delta > 0$ such that $F$ is a
$\lambda$-hyperbolic $n$-triangular map with $Dom(F) = \Si \ L_0$. 
We also have that $F$ satisfies (A1) and (A2) in Subsection \ref{H} 
since $\Si$ is close to
$\Lambda$. Then, $F$ has a periodic point by Theorem \ref{op2} since $\lambda> 2$. This periodic point
belongs to a periodic orbit of $X_t$ which in turns belongs to $M(X)$ since it is maximal invariant.
Consequently $X_t$ has a periodic orbit in $M(X)$, a contradiction. This contradiction
proves the result.
\end{proof}

\medskip 

\flushleft
A. M. L\'opez B\\
Instituto de Matem\'atica, Universidade Federal do Rio de Janeiro\\
Rio de Janeiro, Brazil\\
E-mail: barragan@im.ufrj.br

\end{document}